\documentclass{article}
\usepackage{latexsym}
\usepackage{mathrsfs}
\usepackage{indentfirst}
\usepackage{amsmath,amsthm, amssymb}
\usepackage{graphicx}
\usepackage{epsfig}
\usepackage{verbatim}
\usepackage{color}

\usepackage{caption}

\usepackage{enumitem}

\usepackage{epstopdf}
\usepackage[hidelinks]{hyperref}

\usepackage{thmtools}

\newtheorem{theorem}{Theorem}[section]
\newtheorem{lemma}{Lemma}

\newtheorem{problem}{Problem}

\newtheorem{proposition}{Proposition}

\newtheorem{claim}{Claim}
\allowdisplaybreaks[2]
\newcommand{\BGL}{\operatorname{BGL}}
\newcommand{\LSD}{\operatorname{LSD}}
\newcommand{\caseheading}[1]{\par\medskip\noindent\textbf{#1}\quad}

\begin{document}
\title{A Bang-Jensen-Gutin-Li type condition for pancyclicity in digraphs
}
\author{ Tianyue Li$^{1}$,
Zan-Bo Zhang$^{1,2}$\thanks{Corresponding author. Email address: eltonzhang2001@gmail.com, zanbozhang@gdufe.edu.cn. 
Supported by Guangdong Basic and Applied Basic Research Foundation (2024A1515012286),
and Guangdong Provincial Department of Education Research Project - Characteristic Innovation Project (2024KTSCX204).}
\\ \small $^1$ School of Statistics and Data Science, Guangdong University of
\\ \small  Finance \& Economics, Guangzhou 510320, China
\\ \small $^2$ Institute of Artificial Intelligence and Deep Learning, Guangdong University
\\ \small of Finance \& Economics, Guangzhou 510320, China
}
\date{}
\maketitle

\begin{abstract} 
\noindent

Bang-Jensen-Gutin-Li type conditions are degree conditions for the existence of 
hamiltonian cycles in digraphs,
which impose degree restrictions on nonadjacent vertices which have a common in-neighbor
or a common out-neighbor.
They can be viewed as directed analogues of both Fan‑type conditions and 
heavy subgraph conditions in undirected graphs, 
and they also generalize the classes of locally semicomplete digraphs, locally in‑semicomplete digraphs,
and locally out‑semicomplete digraphs.
In this paper, we prove what is, to our knowledge, the first Bang-Jensen-Gutin-Li 
type condition for pancyclicity in digraphs.
More precisely, let $D$ be a strong digraph with order $n\ge 3$.
We show that, if for every $u\in V(D)$ belonging to a
dominated nonadjacent pair or a dominating nonadjacent pair, we have $d(u)\ge n$,
then $D$ is pancyclic, unless $D$ belongs to one of two well-characterized
classes of exceptional digraphs.

$\newline$\noindent\textbf{Key words}: hamiltonicity; Bang-Jensen-Gutin-Li condition; pancyclicity; digraph;
\end{abstract}

\section{Introduction, terminology and notation} \label{sec:Intro}

Throughout this paper, we consider only strict digraphs, i.e., digraphs without loops or parallel arcs.
A notation summary will be given at the end of this section. 
Any unclear notation may be found there or in reference \cite{BG2007}.

The existence conditions for hamiltonian cycles in digraphs have long been a popular research topic.
Classical conditions include following ones: A digraph $D$ on $n\geq 2$ vertices is hamiltonian if
\begin{enumerate}
	\item (Ghouila-Houri \cite{Gho1960}) $d(u)\geq n$ for every $u\in V(D)$.
	\item (Woodall \cite{Woo1972}) $d^+(u)+d^-(v) \geq n$ for 
	all pairs of vertices $u,v \in V(D)$ such that there is no arc from $u$ to $v$.
	\item (Meyniel \cite{Mey1973}) $d(u)+d(v) \geq 2n-1$ for 
	all pairs of nonadjacetn vertices $u,v \in V(D)$.
\end{enumerate}

In their seminal paper \cite{BGL1996}, Bang‑Jensen, Gutin and Li introduced a new type of condition
that imposes degree restrictions on the following vertex pairs in a digraph.
For two nonadjacent vertices $u$ and $v$,
if they have a common in-neighbor or a common out-neighbor, 
they are called a dominating nonadjacent pair or a dominated nonadjacent pair, respectively.
For short, we call this type of condition a BGL‑type condition.
Their main results are as follows.

\begin{theorem}\label{thm:BGL-hamiltonian1}
Let $D$ be a strong digraph of order $n$. Suppose that, for every dominated nonadjacent pair
$\{x,y\}$, either $d(x) \geq n$ and $d(y) \geq n - 1$ or
$d(x) \geq n - 1$ and $d(y) \geq n$, then $D$ is hamiltonian.
\end{theorem}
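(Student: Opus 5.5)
We prove Theorem~\ref{thm:BGL-hamiltonian1} by the standard longest-cycle argument. Since $D$ is strong and $n\ge 2$, $D$ has a cycle. Let $C$ be a longest cycle and suppose, for a contradiction, that $C$ is not hamiltonian.

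\textbf{Step 1: reduce to a non-trivial $C$-bypass.} Because $D$ is strong and $V(D)\setminus V(C)\neq\emptyset$, there is a path $P=x_0x_1\cdots x_{k+1}$ with $x_0,x_{k+1}\in V(C)$, $k\ge1$, and internal vertices outside $C$ (a $C$-bypass). Choose $P$ so that the gap of $C$ it spans, the segment $C[x_0^+,x_{k+1}^-]$, is as short as possible; this gap has $\gamma\ge 1$ vertices, since otherwise $C$ extends. Let $z$ be a vertex of the gap.

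\textbf{Step 2: produce dominated nonadjacent pairs.} The point is that $x_0$ has two out-neighbours, $x_0^+$ on $C$ and $x_1$ off $C$, so $\{x_0^+,x_1\}$ has a common in-neighbour. Symmetrically, $x_{k+1}$ is a common out-neighbour of $x_k$ and $x_{k+1}^-$. Maximality of $C$ rules out the arcs $x_kx_{k+1}^{-}$ and $x_0^{+}x_1$ in the forbidden directions. Using the minimality of the gap, I would show that $x_k$ and $x_{k+1}^-$ are nonadjacent, so $\{x_k,x_{k+1}^-\}$ is a dominated nonadjacent pair. (If a dominating pair were needed instead, the converse digraph could be used, but the hypothesis concerns dominated pairs.)

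\textbf{Step 3: degree counting.} Let $y=x_k$ and $w=x_{k+1}^-$. The hypothesis gives $d(y)+d(w)\ge 2n-1$, with one of the two vertices of degree at least $n$. Count the arcs between $\{y,w\}$ and $C$, and between $\{y,w\}$ and $R=V(D)\setminus V(C)$, in the Meyniel style. For $y$, the arcs to and from $C$ satisfy the usual exclusion: if $v\to y$ then $y\not\to v^{+}$ along the part of $C$ avoiding the gap, since otherwise $C$ can be lengthened. This gives $d_C(y)\le |C|$ up to a correction supported on the gap. For $w$, its arcs into the bypass and into $R$ are restricted similarly by maximality. Summing should give $d(y)+d(w)\le 2n-2$, contradicting the hypothesis.

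The main obstacle is Step 3. Raw counting of this kind typically yields only about $2n-1$, not $2n-2$, so the asymmetric hypothesis, with one degree at least $n$, must be used. I would first try the crossing-arc lemma, namely that if $y$ cannot be inserted into $C$ then $d_C(y)\le |C|$, together with the Bang-Jensen--Gutin--Li multi-insertion lemma, which lets a path be merged into a cycle when each vertex can be inserted. This yields $d_C(y)\le|C|$, while $d_R(y)\le 2(|R|-1)$ is sharp only in degenerate cases. If $w$, which lies on $C$, has large degree, it has both an in-neighbour and an out-neighbour adjacent in a way that creates a longer cycle through $P$. The delicate point is handling the equality cases, and I expect a subcase analysis on whether $k=1$ and whether $\gamma=1$.
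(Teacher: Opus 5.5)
The paper does not prove this statement; it quotes it from \cite{BGL1996}. So your argument has to stand on its own, and it does not. Step~3 carries the entire content of the theorem, but it is only a plan, and the inequality it aims at is false.

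Steps~1 and~2 are fine. An arc $x_{k+1}^-\to x_k$ would insert $x_k$ into $C$. An arc $x_k\to x_{k+1}^-$ would give a bypass with a shorter gap, or a longer cycle when $\gamma=1$. Hence $\{x_k,x_{k+1}^-\}$ is nonadjacent with common out-neighbour $x_{k+1}$. (In the convention of \cite{BGL1996}, which the paper itself uses in its proofs, e.g.\ Case~1 in the proof of Theorem~\ref{thm:n-minus-one}, a dominated pair has a common \emph{in}-neighbour. The pair to feed to the hypothesis is therefore the mirror pair $\{x_0^+,x_1\}$; this is harmless by symmetry.)

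The problem is that $d(y)+d(w)\le 2n-2$ fails in exactly your configuration. Let $m\ge5$, let $C=c_0c_1\cdots c_{m-1}c_0$, and add a vertex $y$ with $N^-(y)=\{c_0\}$ and $N^+(y)=\{c_0,c_3,c_4,\ldots,c_{m-1}\}$. Join $c_2$ to every other $c_i$ in both directions, except for the arc $c_2\to c_1$. Then:
\begin{itemize}
\item $c_0$ is the only in-neighbour of both $c_1$ and $y$, so this strong digraph of order $n=m+1$ is non-hamiltonian and $C$ is a longest cycle;
\item $c_0yc_3$ is the bypass with the shortest gap, namely $\{c_1,c_2\}$;
\item $y$ and $w=c_2$ are nonadjacent with common out-neighbour $c_3$;
\item $d(y)+d(w)=(m-1)+(2m-3)=3n-7>2n-2$.
\end{itemize}
The converse digraph does the same for $\{x_0^+,x_1\}$. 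The example also refutes your heuristic about $w$: a gap vertex can have degree $2n-5$ with no longer cycle anywhere.

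Any proof along these lines must therefore use $\min\{d(y),d(w)\}\ge n-1$ essentially; in the example the hypothesis fails only because $d(y)=n-2$. Moreover, the light vertex is not always $y$. Take $C$ to be a bare $m$-cycle, let $c_0\to y\to c_2$, and add $m-1$ further vertices $r_i$ with $y\leftrightarrow r_i$ and no other arcs. Again $C$ is longest and $c_0yc_2$ is the shortest-gap bypass. But now $d_R(y)=2(|R|-1)$ is attained, so this equality is not ``degenerate''. Also $d(y)=2m=n$, and it is $w=c_1$ that has degree $2$.

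What is missing is a genuine argument for one of two things. Either the end pair of a shortest-gap bypass always violates the asymmetric condition (one degree at most $n-2$, or both at most $n-1$), or further pairs must be brought into play. Either route needs two ingredients. First, control of the arcs between the gap and all of $V(D)\setminus V(C)$, obtained from minimality of the gap over bypasses through other outside vertices. Second, a bound on $d_R(y)$ that reacts to the structure of $R$. Your proposal contains neither. The tools you cite, the bound $d_C(y)\le|C|$ and multi-insertion, are never attached to a specific path and cycle.
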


\begin{theorem}\label{thm:BGL-hamiltonian2}
	Let $D$ be a strong digraph of order $n$. Suppose that, for every pair of dominating
	nonadjacent and every pair of dominated nonadjacent vertices $\{x,y\}$,
	$\min\{d^+(x)+d^-(y), d^-(x)+d^+(y) \}\geq n$, then $D$ is hamiltonian.
\end{theorem}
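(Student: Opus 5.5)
We prove Theorem~\ref{thm:BGL-hamiltonian2} by taking a longest cycle and deriving a contradiction from the hypothesis that it is not hamiltonian, in the style of the proofs of Ghouila-Houri's and Woodall's theorems.

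\textbf{Setup.} Suppose $D$ is strong but not hamiltonian. Let $C=v_1v_2\cdots v_mv_1$ be a longest cycle, so $m<n$; a strong digraph on at least two vertices contains a cycle, so $C$ exists. Since $D$ is strong, there is a path $P=x_0x_1\cdots x_kx_{k+1}$ with $k\ge1$ that leaves $C$ at $x_0=v_i$, has internal vertices $x_1,\dots,x_k\notin V(C)$, and returns to $C$ at $x_{k+1}=v_j$. Choose such a $C$ and $P$ so that the gap $C[v_{i+1},v_{j-1}]$ is as short as possible; the case $v_i=v_j$ is allowed.

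\textbf{Step 1: produce a nonadjacent pair.} By maximality of $C$, $x_1$ cannot be inserted between two consecutive vertices $v_l,v_{l+1}$ of $C$. In particular, since $v_i\to x_1$ and $v_i\to v_{i+1}$, the vertices $x_1$ and $v_{i+1}$ have the common in-neighbour $v_i$. If they were adjacent, either $x_1\to v_{i+1}$ would lengthen $C$, or $v_{i+1}\to x_1$ would combine with the minimality of the gap to give a longer cycle. So $\{x_1,v_{i+1}\}$ is a dominating nonadjacent pair. Symmetrically, $\{x_k,v_{j-1}\}$ is a dominated nonadjacent pair, with common out-neighbour $v_j$. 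The hypothesis therefore gives
\[
d^+(x_1)+d^-(v_{i+1})\ge n \quad\text{and}\quad d^-(x_1)+d^+(v_{i+1})\ge n,
\]
and the analogous inequalities for $\{x_k,v_{j-1}\}$.

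\textbf{Step 2: count.} Use the standard crossing argument. If $v_{i+1}\to v_{l+1}$ and $x_1\to v_l$ for suitable $l$, then the cycle can be rerouted through $P$ to obtain a cycle longer than $C$. Hence the sets $N^+(x_1)\cap V(C)$ and $(N^-(v_{i+1})\cap V(C))^{+}$ are essentially disjoint, with shifts in the correct direction. Split the vertex set into $V(C)$, $V(P)\setminus V(C)$, and the remaining vertices $R$.
\begin{itemize}
\item On $C$, the shifted neighbour sets show that $d^+_C(x_1)+d^-_C(v_{i+1})\le m$.
\item Outside $C$, the vertex $v_{i+1}$ has no neighbour $w$ that is also a neighbour of $x_1$ in the pattern $x_1\to w\to v_{i+1}$, since that would give a longer cycle. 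This yields $d^+_{D-C}(x_1)+d^-_{D-C}(v_{i+1})\le n-m-1$, the loss of $1$ coming from $x_1$ itself.
\end{itemize}
Adding the two bounds gives $d^+(x_1)+d^-(v_{i+1})\le n-1$, which contradicts Step~1.

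\textbf{Main obstacle.} The hardest part is making the crossing and rerouting arguments in Step~2 rigorous when $k\ge2$, and when arcs from $x_1$ lead back to vertices of $C$ inside the gap. This is where the minimal-gap choice of $P$ is essential, and I expect this case analysis to be long. A cleaner alternative may be to let $P$ be a shortest $C$-path, use the path-merging lemma (the Gutin--Bang-Jensen lemma on inserting a path into a cycle), and apply the degree inequality both to $\{x_1,v_{i+1}\}$ and to $\{x_k,v_{j-1}\}$. If necessary, averaging the two inequalities for each pair handles the asymmetric in/out counts.
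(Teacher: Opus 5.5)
The paper gives no proof of this statement; it is quoted from Bang-Jensen, Gutin and Li. Your proposal therefore has to stand on its own, and it fails at the first bullet of Step~2, which carries the whole argument. Step~1 and the bound off $C$ are fine; note that $v_{i+1}\to x_1$ contradicts minimality of the gap rather than giving a longer cycle, but either way it is a contradiction.

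The crossing, however, never produces a longer cycle. As you state it ($v_{i+1}\to v_{l+1}$ and $x_1\to v_l$), it involves out-neighbours of $v_{i+1}$ and yields nothing; it occurs, with $l=4$, in the example below. The version matching your shifted sets, $x_1\to v_l$ with $v_{l-1}\to v_{i+1}$, gives only the two disjoint cycles $v_ix_1C[v_l,v_i]$ and $C[v_{i+1},v_{l-1}]v_{i+1}$. That is a cycle factor of $D[V(C)\cup\{x_1\}]$, not a longer cycle.

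In fact the bound $d^+_C(x_1)+d^-_C(v_{i+1})\le m$ is false. Let $D$ consist of the cycle $C=v_1v_2v_3v_4v_5v_1$, a vertex $x$ with arcs $v_1x$, $v_5x$, $xv_4$, $xv_5$, and arcs in both directions between $v_2$ and each of $v_1,v_3,v_4,v_5$.
\begin{itemize}
\item A hamiltonian cycle would have to use $v_2v_3$, the only arc into $v_3$. Hence it uses $v_3v_4$, hence $xv_5$, and then $v_4$ has no admissible successor. So $D$ is strong and non-hamiltonian, and $C$ is a longest cycle.
\item The only other longest cycles are $v_1xv_4v_2v_5v_1$ and $v_1xv_4v_5v_2v_1$. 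One checks that no longest cycle has a bypass with gap below $2$.
\item So $P=v_1xv_4$, with gap $\{v_2,v_3\}$, is an admissible choice, and your pair is $\{x,v_2\}$.
\end{itemize}
Yet $d^+_C(x)+d^-_C(v_2)=2+4=6>5=m$, and $d^+(x)+d^-(v_2)=6=n$.

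The example shows more. There also $d^-(x)+d^+(v_2)=6=n$, and $d(x)+d(v_2)=2n$. So the pair $\{x_1,v_{i+1}\}$ satisfies both inequalities of the hypothesis while your configuration exists. No count at this single pair, and in particular no averaging, can give the contradiction.

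In the example the hypothesis fails only at $\{x_k,v_{j-1}\}=\{x,v_3\}$, where $d^+(x)+d^-(v_3)=3$. Moreover, $v_3$ is precisely the gap vertex that cannot be inserted into the cycle $C[v_j,v_i]P=v_4v_5v_1xv_4$. Some gap vertex must always fail to be insertable there, since otherwise the multi-insertion lemma would give a cycle on $V(C)\cup\{x_1,\dots,x_k\}$, longer than $C$.

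The missing ingredient is an argument that goes beyond the pair $\{x_1,v_{i+1}\}$. For example, it could exploit such a non-insertable gap vertex together with the degree conditions at other bad pairs. Your ``Main obstacle'' paragraph acknowledges that this step is open but does not supply it, so as written the proposal does not prove the theorem.
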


Theorem~\ref{thm:BGL-hamiltonian1} strengthens the condition of Ghouila-Houri, and
Theorem~\ref{thm:BGL-hamiltonian2} strengthens the condition of Woodall,
while a similar strengthening of Meyniel condition conjectured in the same paper
remains open to this day.

The conditions involving dominated nonadjacent pair and dominating nonadjacent pairs are closely related to
many important concepts and ideas in the study of cycle structures. We name a few below.

\begin{enumerate}
\item They can be viewed as the extension of Fan-type conditions (\cite{Fan1984}) from
undirected graphs to digraphs.
The widely used Fan-type conditions strengthen various degree conditions 
by restricting degree lower bounds on vertex pairs at distance two only.
Since the two vertices in any dominated or dominating pair have distance two in the underlying graph, 
BGL‑type conditions serve as the natural directed counterpart.

\item 
They generalize three classes of digraphs: locally in-semicomplete digraphs (LISDs), 
locally out-semicomplete digraphs (LOSDs), and locally semicomplete digraphs (LSDs). 
An \textit{LISD} is a digraph in which the in-neighborhood of every vertex induces a semicomplete digraph; 
an \textit{LOSD} is defined analogously with respect to out-neighborhoods. 
An \textit{LSD} is both locally in-semicomplete and locally out-semicomplete. 
Consequently, LISDs contain no nonadjacent dominating pairs, LOSDs contain no nonadjacent dominated pairs, 
and LSDs contain neither.
Therefore, any BGL-type condition restricting the degrees of dominated
nonadjacent pairs must contain all LOSDs and LSDs.
Similarly, any BGL-type condition restricting the degrees of dominating 
nonadjacent pairs must contain all LISDs and LSDs.

From the above discussion we see that Theorem~\ref{thm:BGL-hamiltonian1} 
immediately gives Hamiltonicity for strong LOSDs and strong LSDs. 
Since reversing arcs exchanges dominated and dominating pairs but preserves hamiltonicity and total degrees, 
the theorem also covers strong LISDs. The hamiltonicity of these classes was previously proved in 
\cite{JBJ1990,BHP1993}. 

\item They can be viewed as digraph analogue of heavy subgraph conditions,
where a typical heavy subgraph condition requires that
if $H$ is an induced copy of a fixed graph $F$,
then the degrees or degree sum of certain vertices in $H$ satisfy prescribed
lower bounds.
BGL‑type conditions are precisely the directed analogues in which 
$F$ is taken as one of the subdigraphs obtained by orienting the edges in a path of length two
in the opposite direction.
\end{enumerate}

Bondy's meta-conjecture states that any nontrivial condition for hamiltonicity also implies pancyclicity
up to a small class of exceptional graphs (\cite{Bondy1973}).
In view of this idea, various Fan-type conditions as well as heavy subgraph conditions
have been proved to imply pancyclicity. 
For digraphs, the afore-mentioned Ghouila-Houri condition, Woodall condition and Meyniel condition
have all been extended to pancyclicity as well (for example, see \cite{HT1976}).
The pancyclicity of LISDs, LOSDs and LSDs has also been well-studied.
In particular, the class of non-pancyclic LSDs, which will be an exceptional class
of our main result, has a nice characterization as below.

A digraph on $n$ vertices is called a \emph{round digraph} if we can label its
vertices $v_0$, $v_1, \dots , v_{n-1}$ such that for each $i$,
$N^+(v_i) = \{v_{i+1}, \dots , v_{i+d^+(v_i)} \}$ and
$N^-(v_i) = \{v_{i-d^-(v_i)}, \dots , v_{i-1}\}$, where the subscripts are taken modulo $n$.
An LSD is called a \textit{local tournament} if it contains no opposite arcs.
An LSD $D$ is said to be \emph{round decomposable}, if it can be constructed from
a local tournament $R$ with $r$ vertecies $\{v_0, v_1, \dots, v_{r-1}\}$
by replacing every vertex $v_i$ with a strong semicomplete digraph $S_i$,
and every arc $v_iv_j$ with all arcs from $V(S_i)$ to $V(S_j)$.
Such a construction is called the \emph{round decomposition} of $D$ and denoted by 
$D = R[S_0, S_1, \dots , S_{r-1}]$. 
The \textit{girth} of a digraph $D$, denoted by $g(D)$, is the length of the shortest cycle in $D$.
The non-pancyclic LSDs are precisely a subclass of the round-decomposable LSDs described as follows.

\begin{theorem}[Guo~\cite{Guo1995}]\label{thm:nonpancyclicLSD}
A strong LSD $D$ is pancyclic if and only if it is not of the form\linebreak[3]
$D = R[S_0, S_1, \dots , S_{r-1}]$, where $R$ is a round local tournament with
$g(R) > \max\{2, \allowbreak |S_0|, \allowbreak |S_1|, \allowbreak \dots , \allowbreak |S_{r-1}|\} +1$.
\end{theorem}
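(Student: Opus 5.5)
The plan is to prove the two directions separately. For the "if not exceptional then pancyclic" direction I would split according to whether $D$ is round decomposable, relying on Bang-Jensen's structure theory for strong LSDs. That theory is not among the results stated earlier in the paper, so I take it as imported.

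\emph{The exceptional digraphs are not pancyclic.} Let $D=R[S_0,\dots,S_{r-1}]$ with $R$ a round local tournament and $g(R)>\max\{2,|S_0|,\dots,|S_{r-1}|\}+1$. Take a cycle $C$ of $D$.
\begin{itemize}
\item If $C$ lies inside one $S_i$, then $|C|\le |S_i|$.
\item Otherwise, contract each $S_i$ to $v_i$. Then $C$ becomes a closed walk in $R$ that leaves each class it enters.
\end{itemize}
In the second case I would use the round labelling: arcs of $R$ go "forward" from $v_i$ to $v_{i+1},\dots,v_{i+d^+(v_i)}$. So the walk must wind around the cyclic order at least once. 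From it one can extract a cycle of $R$ whose vertex classes are all visited by $C$, which gives $|C|\ge g(R)$. Hence no cycle has length $m+1$, where $m=\max|S_i|$. Since $3\le m+1<g(R)\le n$, this is a genuine gap, and $D$ is not pancyclic.

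\emph{Converse, non-round-decomposable case.} Here I would invoke the structural theorem that a strong non-round-decomposable LSD has a semicomplete-like decomposition with a strong semicomplete part. I would then argue pancyclicity by the standard shortening argument. Start from a hamiltonian cycle, which exists by the hamiltonicity of strong LSDs. Show that any cycle of length $k>3$ can be shortened to one of length $k-1$ using the local semicompleteness of out- and in-neighbourhoods along the cycle. The only obstructions to shortening are exactly the round-decomposable configurations, which are excluded in this case.

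\emph{Converse, round-decomposable case.} Let $D=R[S_0,\dots,S_{r-1}]$ with $g(R)\le m+1$ (or $g(R)\le 3$), and let $|S_j|=m$.
\begin{itemize}
\item \emph{Lengths $3,\dots,m$.} By Moon's theorem, the strong semicomplete digraph $S_j$ contains cycles of these lengths.
\item \emph{Inflation.} Take a cycle $v_{i_1}\cdots v_{i_\ell}v_{i_1}$ of $R$. Every vertex of a strong semicomplete digraph lies on a hamiltonian cycle, so each $S_{i_t}$ has paths of every order $1,\dots,|S_{i_t}|$. Substituting such a path for each $v_{i_t}$ gives cycles of $D$ of every length from $\ell$ up to $\sum_t |S_{i_t}|$.
\item \emph{Lengths up to $n$.} Apply inflation to a cycle of $R$ of each length from $g(R)$ to $r$; the hamiltonian cycle of $R$ yields all lengths up to $n$. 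Since $g(R)\le m+1$, these intervals overlap with the lengths already found inside $S_j$.
\end{itemize}

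\emph{Main obstacle.} The key lemma is that a strong round local tournament has cycles of every length from $g(R)$ to $r$. I would try to prove it by taking a cycle of length $k>g(R)$ and using roundness: some vertex $v_i$ has an out-neighbour two steps ahead along the cycle, so it can be skipped to shorten the cycle by one. If no vertex can be skipped, every cycle arc is to the immediate successor in the round order, and I would aim to show this forces $k=g(R)$. The delicate points are making this skipping rigorous when the cycle winds more than once around the round order, and handling small girth $g(R)=3$ with $m\le 1$.
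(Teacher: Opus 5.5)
The paper does not prove this statement; it quotes it from Guo. Your outline therefore has to stand on its own, and its non-round-decomposable case is a genuine gap. There you only assert that a $k$-cycle can be shortened ``using local semicompleteness'' and that the only obstructions are round-decomposable configurations. That assertion is the whole content of Guo's theorem in this case, and you give no argument for it. For semicomplete $D$, pancyclicity of strong semicomplete digraphs suffices, so the real work is for non-semicomplete, non-round-decomposable $D$, where the Bang-Jensen--Guo--Gutin--Volkmann structure has to be used quantitatively.

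A descending induction along cycles also cannot carry your hypothesis, because non-round-decomposability of $D$ is not inherited by $D[V(C)]$. For instance, on $\{a,b,c,s_1,s_2\}$ take the arcs $a\to b\to c\to s_1\to s_2\to a$, $c\to s_2$, $s_1\to a$, $s_1\to b$, $b\to s_2$.
\begin{itemize}
\item This is a strong local tournament that is not round: $N^+(s_1)$ and $N^-(a)$ force any round labelling to be, up to rotation, $c,s_1,s_2,a,b$, and in it $N^+(b)=\{c,s_2\}$ is not an interval.
\item It is not round decomposable either. Its only strong subtournaments of order at least $2$ are on $\{a,b,s_2\}$ and $\{b,c,s_1\}$, and neither can be a block, since $c$, respectively $a$, is adjacent to only some of their vertices.
\item Yet the $4$-cycle $abcs_1a$ induces a round local tournament.
\end{itemize}
So after one shortening step the hypothesis you intend to exploit is lost. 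You would have to choose the cycles carefully or use vertices off them.

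Your round-decomposable case correctly reduces, via inflation and Moon's theorem, to the lemma that a strong round local tournament $R$ of order $r$ has cycles of all lengths $g(R),\dots,r$. The deletion argument you propose for that lemma fails, however. A cycle with no skippable vertex need not use only arcs $v_iv_{i+1}$, nor be a shortest cycle. Take $r=12$ and $N^+(v_i)=\{v_{i+1},v_{i+2},v_{i+3}\}$. The cycle $v_0v_2v_4v_6v_8v_{10}v_0$ has length $6$ and no vertex can be skipped, since $v_i\nrightarrow v_{i+4}$; but $g(R)=4$.

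Argue upwards instead.
\begin{itemize}
\item First, a shortest cycle winds exactly once around the round order. Suppose $c_0c_1\cdots c_{k-1}c_0$ winds at least twice. Then some arc $c_tc_{t+1}$ with $1\le t\le k-2$ passes over the position of $c_0$ as the first turn is completed. Since $N^+(c_t)$ is an interval starting at the successor of $c_t$, we get $c_t\to c_0$, and $c_0\cdots c_tc_0$ is a shorter cycle.
\item Second, let a once-winding cycle contain an arc $v_av_b$ with $b\neq a+1$. Then $v_{a+1}$ is off the cycle and $v_a\to v_{a+1}$. Also $v_{a+1}\to v_b$: these two out-neighbours of $v_a$ are adjacent, and $v_b\to v_{a+1}$ would put $v_a$ into the interval $N^+(v_b)$, creating a $2$-cycle. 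Replacing $v_av_b$ by $v_av_{a+1}v_b$ lengthens the cycle by one.
\end{itemize}
Starting from a shortest cycle and inserting repeatedly gives every length from $g(R)$ to $r$.

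Finally, there is a small slip in your first direction. The missing length is $\max\{2,m\}+1$, not $m+1$; if every $|S_i|=1$, the gap is at length $3$.
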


In contrast to the above results, no BGL-type 
condition for pancyclicity appears to exist in the literature.
The problem of generalizing such conditions to pancyclicity was first explicitly 
stated by Darbinyan and Karapetyan~\cite{DK2017}, 
who obtained the following partial result towards 
generalizing of Theorem~\ref{thm:BGL-hamiltonian1} to pancyclicity. 

\begin{theorem}\label{thm:DK2017}
Let $D$ be a strongly connected digraph
of order $n \geq 3$ with minimum semi-degree at least two. 
Suppose that $d(x) \geq n$ and $d(y) \geq n-1$ or $d(x) \geq n-1$
 and $d(y) \geq n$ for every dominated nonadjacent pair
$\{x, y\}$. Then either $D$ contains a cycle of length $n-1$ or $n$ is even and $D$ belongs to
the class of balanced complete bipartite digraphs or to the class of digraphs obtained
by removing one arc from a balanced complete bipartite digraph.
\end{theorem}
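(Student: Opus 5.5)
The plan is to follow the standard route for producing a cycle of length $n-1$ from a hamiltonian cycle. First apply Theorem~\ref{thm:BGL-hamiltonian1} to obtain a hamiltonian cycle $C=x_0x_1\cdots x_{n-1}x_0$ of $D$. Next assume, for contradiction, that $D$ has no $(n-1)$-cycle. Then no vertex $x_i$ can be skipped: there is no arc $x_{i-1}x_{i+1}$. More generally, arcs of certain lengths along $C$ are forbidden. A chord $x_ix_j$, combined with suitable chords near $x_{i+1}$ or $x_{j+1}$, would yield an $(n-1)$-cycle. This is the usual Häggkvist--Thomassen style bookkeeping. The hypothesis on minimum semi-degree at least two is used repeatedly: every $x_i$ has an out-neighbour other than $x_{i+1}$ and an in-neighbour other than $x_{i-1}$.

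The key observation is that $x_{i-1}$ and $x_{i+1}$ have the common out-neighbour $x_i$... strictly, $x_{i-1}\to x_i$ and $x_i\to x_{i+1}$, so this does not give a common neighbour directly. Instead consider $x_{i}$ and $x_{i+2}$. Both are out-neighbours of nothing common a priori, but $x_{i+1}\to x_{i+2}$ and $x_i\to x_{i+1}$. So the useful pairs are those of the form $\{x_{i-1},y\}$ with a common out-neighbour. For each $i$, pick an extra in-neighbour $y$ of $x_i$ with $y\neq x_{i-1}$. If $y$ and $x_{i-1}$ are nonadjacent, they form a dominated nonadjacent pair, and the degree hypothesis gives $d(x_{i-1})+d(y)\ge 2n-1$. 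If they are adjacent, the arc between them combined with $C$ gives candidate shortcuts. The counting step runs as follows. Fix such a pair $\{x,y\}$ with $d(x)\ge n$, say. Partition the arcs incident with $x$ and $y$ according to their position relative to $C$. Then show, via an injection argument, that the absence of an $(n-1)$-cycle forces $d(x)+d(y)\le 2n-2$. The injection pairs an out-neighbour $x_j$ of $x$ with a forbidden in-neighbour $x_{j-2}$ or $x_{j+1}$ of $y$. This contradicts the hypothesis unless every inequality is tight.

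In the tight case, analyse the structure. Equality forces $d(x)=d(y)=n-1$ or $n$, with the neighbourhoods of $x$ and $y$ alternating around $C$. Propagating this around $C$ should show the following. The vertices with even index and those with odd index are independent. Every arc between the two parity classes in both directions is present, except possibly one. Moreover $n$ must be even, so $D$ is $\overleftrightarrow{K}_{n/2,n/2}$ or that digraph minus one arc. I expect this extremal analysis to be the main obstacle. One must rule out near-extremal configurations in which the counting is tight only locally. This requires showing that tightness at one pair $\{x_{i-1},y\}$ transfers to the pair at $x_{i+1}$. I would try to prove first that $D$ has no $3$-cycle-like chords, that is, no arc $x_ix_{i+2}$ or $x_{i+2}x_i$ patterns producing an odd shortcut. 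From this, bipartiteness along $C$ follows by induction on $i$, and the completeness of the bipartite digraph then follows from the degree bounds.
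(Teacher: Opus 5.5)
The paper does not prove this statement. It quotes it from Darbinyan and Karapetyan \cite{DK2017} and uses it only as a black box in Case~1 of Theorem~\ref{thm:n-minus-one}. Your proposal therefore has to stand on its own, and its central step is false.

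You claim that, when $D$ has no $(n-1)$-cycle, an injection gives $d(x)+d(y)\le 2n-2$ for a nonadjacent pair $\{x_{i-1},y\}$ with common out-neighbour $x_i$. The balanced complete bipartite digraph on $2m\ge 4$ vertices is a counterexample.
\begin{itemize}
\item It satisfies every hypothesis and has no cycle of the odd length $n-1$.
\item Let $C$ alternate between the parts, and let $y\neq x_{i-1}$ be another in-neighbour of $x_i$.
\item Then $y$ and $x_{i-1}$ lie in the same part, so they are nonadjacent, yet $d(x_{i-1})+d(y)=2n$.
\end{itemize}
So no such injection can exist.

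The proposal is also internally inconsistent here. A bound of $2n-2$ contradicts $d(x)+d(y)\ge 2n-1$ outright, so there is no ``tight case'' left. Your tight case has degrees equal to $n$, which violates the bound you just claimed. If the counting worked, it would prove that the exceptional digraphs in the conclusion do not exist. If you meant a weaker bound such as $2n$, it no longer contradicts the hypothesis. The whole theorem then sits inside the tight case, for which you offer only the hope that tightness ``propagates''.

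The other steps are also missing. When $x_{i-1}$ and $y$ are adjacent, you only say the arc gives ``candidate shortcuts''. Such an arc need not close an $(n-1)$-cycle. If every such pair is adjacent (for instance when $D$ is locally semicomplete), the degree hypothesis is vacuous. You would then need a purely structural argument that an $(n-1)$-cycle exists, and nothing in your sketch provides one.

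What a real proof needs is a mechanism by which the degree bounds force specific chords relative to $C$. Each forced chord must either close an $(n-1)$-cycle or extend an alternating, bipartite pattern one step further around $C$. Your proposal names this goal but gives no lemma that achieves it, and this is where the substance of \cite{DK2017} lies.

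A minor point: in \cite{BGL1996,DK2017}, and in the paper's own arguments, a \emph{dominated} pair has a common in-neighbour, whereas your pairs have a common out-neighbour. This is harmless. You can run your argument on the reverse of $D$, since strong connectivity, $\delta^0\ge 2$ and the conclusion are all invariant under reversal.
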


A potential obstacle to extending Theorem~\ref{thm:BGL-hamiltonian1} to pancyclicity lies in the following fact.
While its condition imposes degree restrictions only on dominated nonadjacent pairs,
the theorem applies to LISDs. Therefore, the exceptional class 
must include non-pancyclic LISDs. Unlike the case of LSDs, however,
a complete characterization of non-pancyclic LISDs seems to
be challenging and has not been achieved.

Therefore, it seems reasonable to seek conditions that impose degree restrictions on both 
dominating and dominated nonadjacent pairs. In this paper, we present what appears to be 
the first result in this direction, as stated below. This result was conjectured earlier in \cite{ZWHL2026}.
Let $\mathcal{D}_L$ be the class of non-pancyclic LSDs as described in Theorem~\ref{thm:nonpancyclicLSD},
and let $\mathcal{D}_B$ be the class of balanced complete bipartite digraphs with at least $4$ vertices.

\begin{theorem}\label{thm:main}
Let $D$ be a digraph with order $n$ in which for every $u\in V(D)$ belong to a dominated nonadjacent pair 
or a dominating nonadjacent pair, $d(u)\geq n$, then $D$ is pancyclic, unless $D$ belongs to $\mathcal{D}_L$
or $\mathcal{D}_B$.
\end{theorem}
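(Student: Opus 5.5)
We will prove Theorem~\ref{thm:main} by induction on the cycle length, downward from $n$, following the standard route for degree-based pancyclicity results. Strong connectivity is implicit: a digraph that is not strong cannot be pancyclic and is not in $\mathcal{D}_L$ or $\mathcal{D}_B$, so we read the theorem, as in the abstract, for strong $D$ with $n\ge 3$. First we dispose of two extreme situations. If $D$ has no dominated and no dominating nonadjacent pair, then $D$ is an LSD and Theorem~\ref{thm:nonpancyclicLSD} gives exactly the dichotomy ``pancyclic or in $\mathcal{D}_L$''. Otherwise, some vertex lies in such a pair. Since the hypothesis of Theorem~\ref{thm:main} implies that of Theorem~\ref{thm:BGL-hamiltonian1}, $D$ has a hamiltonian cycle $C=v_0v_1\cdots v_{n-1}v_0$. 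Our goal is then to show that $D$ contains a $k$-cycle for every $3\le k\le n-1$, unless $D\in\mathcal{D}_B$.

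The core step is a ``shortening'' lemma. Suppose $D$ has a $k$-cycle $C_k$ with $4\le k\le n$, and $D$ is not in $\mathcal{D}_B$ and not an LSD. We want to show that $D$ has a $(k-1)$-cycle. Let $C_k=x_0x_1\cdots x_{k-1}x_0$. We consider the pairs $\{x_i,x_{i+2}\}$, which have the common in-neighbour or out-neighbour $x_{i+1}$ in the appropriate orientation.
\begin{itemize}
\item If $x_ix_{i+2}$ is an arc for some $i$, then skipping $x_{i+1}$ yields a $(k-1)$-cycle.
\item If $x_{i+2}x_i$ is an arc, we get a short cycle, and we look instead at the chords of the cycle.
\item If $x_i$ and $x_{i+2}$ are nonadjacent, they form a dominated pair (via $x_{i+1}$ as out-neighbour of $x_i$ and in-neighbour of $x_{i+2}$? not quite). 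So we use the finer structure: $x_{i-1}$ and $x_{i+1}$ both dominate $x_i$-type configurations.
\end{itemize}
Concretely, for each $i$ the vertices $x_{i}$ and $x_{i+2}$ share the in-neighbour or out-neighbour only when the arcs are oppositely oriented. Hence the argument will instead use the pairs $(x_{i-1},x_{i+1})$ through common neighbours off the cycle, together with the ``$v$ dominates $x_i$, $x_{i+1}$ dominates $v$'' insertion/deletion arguments of Häggkvist--Thomassen type. If no $(k-1)$-cycle exists, then for each $v$ off $C_k$ and each $i$ we cannot have both $x_{i}\to v$ and $v\to x_{i+2}$. Similarly, for cycle vertices no chord $x_i\to x_{i+2}$ exists. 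These give counting bounds $d_{C_k}(v)\le k$ and $d(x_i)$ bounded in terms of the chords. When a vertex $u$ lies in a nonadjacent dominated or dominating pair, $d(u)\ge n$ forces near-equality in these counts. Near-equality forces the arcs between $C_k$ and the rest to alternate with period two, which yields the bipartite structure.

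For the base of the induction we obtain the $(n-1)$-cycle from the hamiltonian cycle $C$ by the argument above. Alternatively, when the minimum semi-degree is at least two, we can invoke Theorem~\ref{thm:DK2017}, whose exceptions, balanced complete bipartite digraphs and those minus an arc, are checked directly. The graphs minus an arc violate our hypothesis, since the tail of the missing arc lies in a nonadjacent pair with degree $n-1$, so only $\mathcal{D}_B$ survives. A vertex of semi-degree one has a unique out-neighbour $w$. Any other in-neighbour of $w$ that is nonadjacent to $u$ forms a dominated pair with $u$, so $d(u)\ge n$ would be needed, which is impossible for small degree. Hence $N^-(w)\setminus\{u\}\subseteq N(u)$, and this local semicompleteness lets us contract or bypass $u$.

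The main obstacle is the middle range. Having a $k$-cycle for a vertex set that is not all of $V(D)$, the degree hypothesis applies only to vertices in nonadjacent dominated or dominating pairs. Vertices with ``locally semicomplete'' neighbourhoods carry no degree bound at all. So we must mix the two regimes: the part of $D$ where the local-semicompleteness holds behaves like an LSD, and we would try to adapt Guo's round-decomposition argument there. The high-degree vertices are handled by the counting above. Showing that a failure to shorten forces $D$ globally into $\mathcal{D}_L$ or $\mathcal{D}_B$, rather than a hybrid, is where we expect most of the case analysis. A promising first attempt is to show that if some vertex has degree at least $n$ and lies in a nonadjacent pair, then $D$ has cycles of all lengths through that vertex, via a Bondy--Thomassen style argument.
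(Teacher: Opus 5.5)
Your plan rests entirely on the ``shortening lemma'' (a $k$-cycle with $4\le k\le n$ yields a $(k-1)$-cycle unless $D$ is an LSD or lies in $\mathcal{D}_B$), and you never prove it. The bullet analysis breaks off (``not quite''), and for $k<n-1$ you explicitly leave the decisive case analysis open.

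What you do write contains an error. If $v\notin V(C_k)$ and $x_i\to v\to x_{i+2}$, replacing $x_{i+1}$ by $v$ gives another $k$-cycle, not a $(k-1)$-cycle. The pattern ruled out by the absence of a $(k-1)$-cycle is $x_i\to v\to x_{i+3}$.

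More seriously, the counting cannot work in the middle range. The hypothesis compares degrees with $n$, not $k$, and a vertex $v$ off $C_k$ can draw up to $2(n-k-1)$ arcs from the other off-cycle vertices. So $d(v)\ge n$ only forces $d_{C_k}(v)\ge 2k-n+2$. Against the bound $d_{C_k}(v)\le k$ this is a contradiction only for $k=n-1$, which is exactly the H\"aggkvist--Thomassen situation. For smaller $k$ the slack $n-k-2$ grows, so no near-equality, no period-two alternation and no global structure follows. This is before noting that vertices in no bad pair carry no degree bound at all.

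Your semi-degree-one step is also incomplete. ``Impossible for small degree'' needs $v_{n-2}\nrightarrow v_0$, i.e.\ the absence of an $(n-1)$-cycle. ``Contract or bypass $u$'' is not an argument. The paper instead propagates $N^+(v_i)=\{v_{i+1}\}$ and $N^-(v_i)=\{v_{i-1}\}$ around the Hamiltonian cycle and concludes $D=C_n$.

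The missing idea is to avoid the middle range altogether. Never shorten a cycle inside $D$; only ever use an $(m-1)$-cycle in an $m$-vertex digraph, and lower the degree threshold together with the order. Your use of Theorem~\ref{thm:DK2017} for the $(n-1)$-cycle matches Case 1 of Theorem~\ref{thm:n-minus-one}. Let $u$ be the vertex outside that cycle:
\begin{itemize}
\item If $u$ is heavy, Lemma~\ref{lem:outside-heavy} gives pancyclicity.
\item If $u$ is non-heavy but $u\leftrightarrow w$ for a heavy $w$ on the cycle, Propositions~\ref{prop:nonheavy-segment} and~\ref{prop:opposite-heavy} either produce an $(n-1)$-cycle avoiding $w$ or exhibit all lengths directly.
\item Otherwise, deleting $u$ creates no new bad pair and costs each heavy vertex at most one arc. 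So $D-u$ satisfies $\BGL(n-1)$, and Theorem~\ref{thm:n-minus-one} applies again, giving the next cycle length.
\end{itemize}

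Your ``hybrid'' worry is settled when this recursion terminates:
\begin{itemize}
\item If it stops at some $D_p\in\mathcal{D}_B$ with $p\ge1$, the last deleted vertex is adjacent to every vertex of $D_p$, and $D_{p-1}$ is pancyclic.
\item If it stops at a directed cycle, reverse induction shows that adding back the non-heavy $u_i$ never creates a bad pair. A bad pair $\{x,y\}$ in $F+u_i$, with $|F|=m$ and $F$ free of bad pairs, would need $d(x)+d(y)\ge 2m+2$, whereas at most $2(m-2)+4=2m$ is possible. Hence $D$ itself is an LSD, and Theorem~\ref{thm:nonpancyclicLSD} applies.
\end{itemize}

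Without this order-reducing, condition-preserving deletion, or a genuine substitute for it, your outline does not yield a proof.
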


We sketch our proof here.
The first key step is to establish the existence of an $(n-1)$-cycle under the theorem’s condition; 
this is carried in the next section. Specifically, a digraph $D$ satisfying the condition 
either contains an $(n-1)$-cycle, belongs to $\mathcal{D}_B$, or is itself a directed cycle,
which is a member of $\mathcal{D}_L$. This shows that $D$ contains an $(n-1)$-cycle unless
it belongs to the exceptional classes.

By a result of H\"{a}ggkvist and Thomassen (Lemma \ref{lem:outside-heavy}, Section \ref{sec:main-proof}), 
such an cycle $Q=u_0u_1 \dots u_{n-2}u_0$, together with the unique outside vertex $v$ of degree at least $n$,
guarantee pancyclicity. Hence if $d(v)\ge n$, we are done.
Suppose instead that $d(v)<n$.
If $v$ is adjacent in both direction to a certain $u_i$ on $Q$ with $d(u_i)\ge n$,
then some insertion techniques (as in the proofs of Propositions \ref{prop:nonheavy-segment} and \ref{prop:opposite-heavy})
yield another $(n-1)$-cycle excluding $u_i$,
and pancyclicity again follows from the result of H\"{a}ggkvist and Thomassen. 

The main proof proceeds by a carefully designed recursion. 
If either of the above conditions holds --- that is,
$d(v)\ge n$ or $d(v)<n$ but $v$ is mutually adjacent to some $u_i$ with $d(u_i)\ge n$ --- then $D$ is pancyclic.
In the remaining case, we show that $D-v$ still satisfies the theorem’s condition, so induction applies.
In the course of the induction, 
we construct a sequence of digraphs terminating at either a directed $3$-cycle,
a pancyclic digraph, or a digraph in $\mathcal{D}_L \cup \mathcal{D}_B$.
In the first two cases we conclude that the original graph $D$ is pancyclic.
In the last case, we show that $D$ is either pancyclic, or $D$ is a non-pancyclic LSD, that is,
a member of $\mathcal{D}_L$, which completes the proof.

We have provided a proof on the existence of an $(n-1)$-cycle in \cite{ZWHL2026} using the multi-insertion technique. 
The proof we present in Section \ref{sec:n-1} is a shorter alternative proof based on Theorem~\ref{thm:DK2017}.
The final proof of Theorem \ref{thm:main} will then be given in Section~\ref{sec:main-proof}.
The paper is concluded with tow open problems in Section~\ref{sec:OpenProblem}.
$\newline$

\noindent\textit{Notation.}
Let $D$ be a digraph. We write $V(D)$ and $A(D)$ for the vertex set and arc set of $D$. 
We use $n$ or $|D|$ to denote $|V(D)|$. 
For $v\in V(D)$, the symbols $N_D^+(v)$ and $N_D^-(v)$ denote the out-neighborhood and in-neighborhood of $v$,
while $d_D^+(v)$, $d_D^-(v)$, and $d_D(v)=d_D^+(v)+d_D^-(v)$ denote its out-degree, in-degree, and total degree, 
where degree subscripts are omitted when the ambient digraph is clear. 
Furthermore, $\delta^+(D)$ and $\delta^-(D)$ denote the minimum out-degree and minimum in-degree, 
$\delta^0(D)=\min\{\delta^+(D),\delta^-(D)\}$ is the minimum semi-degree. 
We write $x\to y$ to mean that $xy\in A(D)$,  $x\nrightarrow y$ to mean that $xy\notin A(D)$
and $x\leftrightarrow y$ means that both $x\to y$ and $y\to x$ are arcs. 
The same notation $x\to y$ is also used to denote the arc $xy$ itself, when the context is clear.
For vertices or vertex sets $X$ and $Y$, the notation $X\to Y$ means that every vertex of $X$ sends an arc to every vertex of $Y$, 
with a singleton identified with its unique vertex. 
If $X\subseteq V(D)$, then $D[X]$ is the subdigraph induced by $X$, $D-X=D[V(D)\setminus X]$, and $D-v$ abbreviates $D-\{v\}$;
the notation $D-xy$ means that the arc $xy$ is deleted, while $F+v$ denotes the induced subdigraph on $V(F)\cup\{v\}$ in the ambient digraph. The symbol $C_k$ is reserved for a directed cycle of length $k$ viewed as a standard digraph, whereas a principal cycle selected within a proof is denoted by $Q$, with subscripts or primes when needed; any additional auxiliary cycles or paths are named locally. If $Q=x_0x_1\cdots x_{k-1}x_0$ is a directed cycle, its vertex subscripts are read modulo $k$; more generally, for a directed path or cycle $R$, the notation $R[x,y]$ denotes the directed segment from $x$ to $y$ along $R$. The symbols $\mathcal{D}_L$, $\mathcal{D}_B$, $\BGL(n)$, and $\LSD$ retain the meanings specified in the text.

\section{The existence of $\boldsymbol{(n-1)}$-cycle}\label{sec:n-1}

Throughout this section and the next, the following shorthand will be useful.
A dominating nonadjacent pair or a dominated nonadjacent pair is called a \emph{bad pair}.
Thus a digraph is an LSD if and only if it contains no bad pairs.
A vertex $x$ in a digraph $D$ with $d(x)\geq |D|=n$ is called a \emph{heavy vertex}. 
A strongly connected digraph $D$ of order $n$ is said to satisfy the \emph{$\BGL(n)$ condition}
if every endvertex of every bad pair in $D$ is heavy, 
i.e., if the hypothesis of Theorem \ref{thm:main} holds in $D$.
Let $P=u_0u_1\cdots u_s$ and $Q=v_0v_1\cdots v_t$ be vertex-disjoint directed paths in a digraph $D$. The path $P$ is said to be \emph{insertable into $Q$} if there is an index $i$, $0\leq i\leq t-1$, such that $v_i\to u_0$ and $u_s\to v_{i+1}$. In this situation, the arc $v_iv_{i+1}$ is called an \emph{insertion partner} of $P$, and replacing this arc by $v_iPv_{i+1}$ gives the $(v_0,v_t)$-path
\[
Q[v_0,v_i]P Q[v_{i+1},v_t]
\]
with vertex set $V(P)\cup V(Q)$. The same terminology applies when $Q=v_0v_1\cdots v_tv_0$ is a directed cycle, with subscripts read modulo $t+1$.

With the terminology in place, the next theorem supplies the $(n-1)$-cycle used in the recursive part of the proof.

\begin{theorem}\label{thm:n-minus-one}
		Let $D$ be a strongly connected digraph of order $n\geq4$. Suppose that, for every vertex $u \in V(D)$ belonging to a dominating nonadjacent pair or a dominated nonadjacent pair, we have $d(u) \geq n$. Then $D$ contains a cycle of length $n-1$, unless $n$ is even and $D \in \mathcal{D}_B$, or $D = C_n$ (which belongs to $\mathcal{D}_L$).
	\end{theorem}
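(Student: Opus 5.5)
The plan is to reduce to Theorem~\ref{thm:DK2017}. Its degree hypothesis follows from ours: every endvertex of a dominated nonadjacent pair is heavy, so for such a pair $\{x,y\}$ we have $d(x)\ge n$ and $d(y)\ge n\ge n-1$. The only extra hypothesis of Theorem~\ref{thm:DK2017} is $\delta^0(D)\ge 2$. So we split into two cases. If $\delta^0(D)\ge2$, Theorem~\ref{thm:DK2017} gives an $(n-1)$-cycle. Otherwise $n$ is even and $D$ is a balanced complete bipartite digraph, or such a digraph with one arc removed. The first is in $\mathcal{D}_B$ (as $n\ge4$). In the second, say the arc $ab$ is missing, where $a$ and $b$ lie in opposite sides. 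Then $a$ and any other vertex $a'$ of its side form a dominated nonadjacent pair, since for $n\ge4$ they have a common out-neighbour. Hence $a$ must be heavy, but $d(a)=n-1$, contradicting the $\BGL(n)$ condition. So this subcase does not occur.

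The main case is $\delta^0(D)\le 1$. By arc reversal (which swaps dominated and dominating pairs and preserves both the hypothesis and the conclusion) we may assume some vertex $v$ has $d^+(v)=1$, with out-neighbour $w$; strong connectivity forces $d^+(v)\ge1$. Then $d(v)\le 1+(n-1)=n$, so $v$ can be heavy only if $v\to w$ and $N^-(v)=V(D)\setminus\{v\}$.

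The idea is that vertices of small degree belong to no bad pair, and this forces local semicompleteness around them. Take any vertex $u\neq v$ in $N^-(v)$ other than $w$. Then $u$ and $w$ are both in-neighbours of... no; more usefully, every in-neighbour $u$ of $v$ has $v$ and its other out-neighbours dominated by $u$. So if $v$ is not heavy, $v$ is adjacent to all of $N^+(u)$ for each in-neighbour $u$ of $v$. Since $v$ has only one out-neighbour, this gives strong restrictions: $N^+(u)\subseteq N^-(v)\cup\{w\}$. Symmetric statements hold for the out-side. Iterating along a hamiltonian cycle, which exists by Theorem~\ref{thm:BGL-hamiltonian1} since the hypothesis is stronger, I would aim to show one of two things. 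Either $D=C_n$, when no vertex is heavy and everything is forced to degree $2$. Or there is a vertex $x$ on the hamiltonian cycle $H=x_0\cdots x_{n-1}x_0$ with $x_{i-1}\to x_{i+1}$, which immediately yields an $(n-1)$-cycle by skipping $x_i$.

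Concretely, let $H$ be a hamiltonian cycle with $v=x_0$ and $w=x_1$. Consider the pair $x_{n-1},x_1$, both dominated... rather, both are neighbours of $x_0$ around $H$. If $x_{n-1}\to x_1$ we skip $x_0$ and are done. Otherwise $x_{n-1}$ and $x_1$ are nonadjacent or only $x_1\to x_{n-1}$, and I would use heaviness of the vertices in the resulting bad pairs, such as $\{x_{i-1},x_{i+1}\}$ with common out-neighbour or in-neighbour $x_i$, to run a standard Ghouila-Houri-style counting argument on $H-x_i$. The aim is to find a chord $x_{j}\to x_{j+2}$ or an insertion of a single vertex that creates an $(n-1)$-cycle. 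The key observation is that if $x_{i-1}\nrightarrow x_{i+1}$ for all $i$, then for each $i$ either $\{x_{i-1},x_{i+1}\}$ is a bad pair, making both heavy, or $x_{i+1}\to x_{i-1}$. Heavy vertices then supply, by the counting in Lemma-type arguments (degree $\ge n$ against $n-1$ positions on $H-x_i$), an arc pattern $x_j\to x_{k+1}$ and $x_k\to x_{j+1}$ giving an $(n-1)$-cycle. I expect this counting step to be the main obstacle, in particular controlling the case where few vertices are heavy. That case should collapse to $D=C_n$ or to the bipartite structure excluded above.
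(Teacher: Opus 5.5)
Your case $\delta^0(D)\ge 2$ is exactly the paper's argument. You reduce to Theorem~\ref{thm:DK2017} and rule out the bipartite digraph minus one arc, because the tail of the missing arc has degree $n-1$ yet lies in a bad pair.

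The case $\delta^0(D)=1$, however, is not proved. You give a plan whose central counting step you yourself leave open, and both concrete ingredients you propose fail.
\begin{itemize}
\item The pair $\{x_{i-1},x_{i+1}\}$ is \emph{not} made a bad pair by $x_i$. Along $H$ we have $x_{i-1}\to x_i\to x_{i+1}$, so $x_i$ is an out-neighbour of one vertex and an in-neighbour of the other. This is precisely the configuration on which the $\BGL(n)$ condition says nothing. Hence your dichotomy ``bad pair, so both heavy, or $x_{i+1}\to x_{i-1}$'' has no basis.
\item The arcs $x_j\to x_{k+1}$ and $x_k\to x_{j+1}$ split $H$ into two disjoint cycles, one on $x_{k+1},\dots,x_j$ and one on $x_{j+1},\dots,x_k$. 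Neither can have length $n-1$, since the other would then be a loop. So this pattern does not produce an $(n-1)$-cycle.
\item The bipartite exception cannot appear in this case at all, since it has $\delta^0\ge 2$.
\end{itemize}

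The missing idea is that no counting is needed: the out-degree-one vertex drives a propagation.
\begin{itemize}
\item You already noted that $v=x_0$ can be heavy only if $N^-(x_0)=V(D)\setminus\{x_0\}$. In particular $x_{n-2}\to x_0$, and then $x_0x_1\cdots x_{n-2}x_0$ is an $(n-1)$-cycle.
\item So assume $x_{n-2}\nrightarrow x_0$. Then $d(x_0)\le n-1$, and $x_0$ lies in no bad pair.
\item If $x_j\to x_0$ with $1\le j\le n-3$, then $x_j$ is a common in-neighbour of $x_0$ and $x_{j+1}$, so these two are adjacent. As $N^+(x_0)=\{x_1\}$ and $j+1\ge 2$, this forces $x_{j+1}\to x_0$. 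Iterating yields $x_{n-2}\to x_0$, a contradiction.
\item Thus $N^-(x_0)=\{x_{n-1}\}$ and $d(x_0)=2$.
\item Any further out-neighbour of $x_{n-1}$ is either $x_1$ or some $x_j$ with $2\le j\le n-2$. The first gives the $(n-1)$-cycle $x_1x_2\cdots x_{n-1}x_1$. The second forms with $x_0$ a nonadjacent pair with common in-neighbour $x_{n-1}$, which is impossible since $d(x_0)=2<n$.
\item So $N^+(x_{n-1})=\{x_0\}$. Repeating these two steps backwards around $H$ gives $D=C_n$.
\end{itemize}
This is the paper's argument (its Claims~\ref{clm:n-minus-one-indegree} and~\ref{clm:n-minus-one-propagation}), and it is what your proposal needs in place of the counting sketch.
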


\begin{proof}
	The hypothesis of the theorem is stronger than the degree condition in
	Theorem~\ref{thm:BGL-hamiltonian1}, hence $D$ is Hamiltonian. Suppose from now
	on that $D$ contains no cycle of length $n-1$; we shall prove that $D$ must be
	one of the two exceptional digraphs in the statement.

	Since $D$ is strong, every vertex has positive in-degree and positive
	out-degree, and consequently the semi-degree $\delta^0(D)\geq 1$. We distinguish two cases.

	\caseheading{Case 1.}$\delta^0(D)\geq 2$.
	
	For every dominated nonadjacent pair $\{x,y\}$, the hypothesis gives
	$d(x)\geq n$ and $d(y)\geq n$. In particular,
	\[
		d(x)+d(y)\geq 2n-1
		\quad\text{and}\quad
		\min\{d(x),d(y)\}\geq n-1.
	\]
	Theorem~\ref{thm:DK2017} now implies that $n$ is even and that $D$ is either
	a balanced complete bipartite digraph or a digraph obtained from a balanced
	complete bipartite digraph by deleting one arc. It remains only to rule out
	the latter possibility.

	Suppose that $H$ is a balanced complete bipartite digraph with partite sets
	$X$ and $Y$, where $|X|=|Y|=m$, and that
	\[
		D=H-xy,
		\qquad x\in X,\quad y\in Y,\quad n=2m.
	\]
	Since $n\geq4$, we have $m\geq2$; choose
	$x'\in X\setminus\{x\}$. The arcs $y\to x$ and $y\to x'$ are both present
	in $D$, whereas $x$ and $x'$ are nonadjacent. Thus $\{x,x'\}$ is a
	dominated nonadjacent pair. On the other hand, deleting the arc $x\to y$
	from $H$ gives
	\[
		d_D(x)=2m-1=n-1,
	\]
	contrary to the hypothesis of the theorem. Therefore the one-arc deletion
	cannot occur, and hence $D\in\mathcal{D}_B$.

	\caseheading{Case 2.}$\delta^0(D)=1$.
	
	There is a vertex whose in-degree or out-degree is one. Reversing all arcs
	interchanges dominated nonadjacent pairs and dominating nonadjacent pairs,
	while preserving the hypothesis and all conclusions of the theorem.
	Therefore, we may assume without loss of generality that some vertex has
	out-degree one.

	Fix a Hamiltonian cycle and label it as
	\[
		Q=v_0v_1\cdots v_{n-1}v_0
	\]
	in such a way that $d^+(v_0)=1$. Necessarily,
	\[
		N^+(v_0)=\{v_1\}.
	\]
	All subscripts in the rest of the proof are taken modulo $n$. The key point
	is that this local degree restriction propagates backwards around $Q$. We
	make this precise in the following two claims.

	\begin{claim}\label{clm:n-minus-one-indegree}
		If $N^+(v_i)=\{v_{i+1}\}$, then
		$N^-(v_i)=\{v_{i-1}\}$.
	\end{claim}

	\begin{proof}
		By cyclically relabeling the vertices, it is enough to consider $i=0$.
		The arc $v_{n-2}\to v_0$ cannot exist, since otherwise
		\[
			v_0v_1\cdots v_{n-2}v_0
		\]
		would be a cycle of length $n-1$. It follows that
		\[
			d(v_0)=d^+(v_0)+d^-(v_0)
			\leq 1+(n-2)=n-1.
		\]
		Consequently, the hypothesis of the theorem ensures that $v_0$ belongs
		to no bad pair.

		Suppose, to the contrary, that the claim is false, then there exists $v_j\to v_0$ for some
		$1\leq j\leq n-3$. Since $v_j\to v_{j+1}$ is an arc of $Q$, the
		vertices $v_0$ and $v_{j+1}$ have the common in-neighbor $v_j$. They
		must be adjacent; otherwise they form a dominated nonadjacent pair
		containing $v_0$, which is impossible by the preceding paragraph.
		Moreover, $v_0$ has no out-neighbor other than $v_1$, and
		$j+1\geq2$. Hence
		\[
			v_{j+1}\to v_0.
		\]
		Repeating this argument along $Q$ eventually gives
		$v_{n-2}\to v_0$, a contradiction. Thus $v_0$ has no in-neighbor
		other than its predecessor $v_{n-1}$ on $Q$. This proves the claim.
	\end{proof}

	\begin{claim}\label{clm:n-minus-one-propagation}
		If
		\[
			N^+(v_i)=\{v_{i+1}\}
			\quad\text{and}\quad
			N^-(v_i)=\{v_{i-1}\},
		\]
		then $N^+(v_{i-1})=\{v_i\}$.
	\end{claim}

	\begin{proof}
		Again, cyclically relabel the vertices so that $i=0$. Suppose that
		$v_{n-1}\to v_j$ for some $j\neq0$. If $j=1$, then
		\[
			v_1v_2\cdots v_{n-1}v_1
		\]
		is a cycle of length $n-1$, a contradiction. Thus, apart from the
		cycle arc $v_{n-1}\to v_0$, any additional out-neighbor of
		$v_{n-1}$ would have to be some $v_j$ with $2\leq j\leq n-2$.

		For such a $j$, the vertices $v_0$ and $v_j$ are nonadjacent, because
		$N^+(v_0)=\{v_1\}$ and $N^-(v_0)=\{v_{n-1}\}$. Moreover, they have
		the common in-neighbor $v_{n-1}$. Hence $\{v_0,v_j\}$ is a
		dominated nonadjacent pair, and the hypothesis would imply
		$d(v_0)\geq n$. This is impossible because $d(v_0)=2$. Therefore
		$v_{n-1}$ has no out-neighbor other than $v_0$, proving the claim.
	\end{proof}

	We can now complete the propagation. From
	$N^+(v_0)=\{v_1\}$ and Claim~\ref{clm:n-minus-one-indegree}, we obtain
	$N^-(v_0)=\{v_{n-1}\}$. Claim~\ref{clm:n-minus-one-propagation} then gives
	$N^+(v_{n-1})=\{v_0\}$, and another application of
	Claim~\ref{clm:n-minus-one-indegree} gives
	$N^-(v_{n-1})=\{v_{n-2}\}$. Continuing in this way around the entire
	Hamiltonian cycle yields, for every $i$,
	\[
		N^+(v_i)=\{v_{i+1}\}
		\quad\text{and}\quad
		N^-(v_i)=\{v_{i-1}\}.
	\]
	Thus $D$ contains no arcs other than those of $Q$, and consequently
	$D=C_n$.

	The two cases exhaust all possibilities, and the proof is complete.
\end{proof}

\section{Proof of Theorem~\ref{thm:main}}\label{sec:main-proof}

Before we proceed to the proof, we recall a pancyclic result of
H\"{a}ggkvist and Thomassen~\cite{HT1976} that will be used repeatedly.
Their result concerns a vertex outside a directed cycle and turns a 
sufficiently large degree into cycles of all intermediate lengths

\begin{lemma}[H\"{a}ggkvist--Thomassen~\cite{HT1976}]\label{lem:outside-heavy}
	Suppose that a digraph $D$ has $n$ vertices and contains an $(n-1)$-cycle $Q$. 
	If, for the vertex $v \notin Q$, we have $d(v) \geq n$, then $D$ is pancyclic.
\end{lemma}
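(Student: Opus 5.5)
The lemma is cited from H\"{a}ggkvist and Thomassen, but here is how I would prove it directly. Let $Q=u_0u_1\cdots u_{n-2}u_0$ and let $v$ be the outside vertex. A cycle of length $n-1$ is given by $Q$ itself, and a cycle of length $n$ comes from inserting $v$ into $Q$. So the task is to produce, for every $k$ with $3\le k\le n$, a cycle of length $k$. In each case the cycle will pass through $v$, use an in-neighbour $u_i$ and an out-neighbour $u_j$ of $v$, and follow $Q$ from $u_j$ back to $u_i$. Such a cycle $vQ[u_j,u_i]v$ has length $2+(i-j \bmod (n-1))$. It therefore suffices to show that, for every $\ell$ with $1\le \ell\le n-2$, there are indices $i,j$ with
\[
u_i\to v,\qquad v\to u_j,\qquad i-j\equiv \ell-1 \pmod{n-1}.
\]
Here $\ell-1=0$ is allowed and means $u_i=u_j$, which gives a $2$-cycle; for $k\ge3$ we need $\ell\ge 1$, and $\ell=n-2$ gives $k=n$. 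Note that $\ell-1$ ranges over $0,\dots,n-3$, which is exactly the set of residues modulo $n-1$ other than $-1\equiv n-2$.

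The core step is a counting (pigeonhole) argument. Write $A=\{i: u_i\to v\}$ and $B=\{j: v\to u_j\}$, viewed as subsets of $\mathbb{Z}_{n-1}$. Then $|A|+|B|=d(v)\ge n$. Fix a residue $r$ with $r\not\equiv -1$. If no $i\in A$, $j\in B$ satisfy $i-j\equiv r$, then $A$ and $B+r$ are disjoint subsets of $\mathbb{Z}_{n-1}$. This forces $|A|+|B|\le n-1$, contradicting $d(v)\ge n$. Hence every residue $r$ is realised, and in particular every $r\in\{1,\dots,n-3\}$ is realised. This yields cycles of all lengths $k=r+2$ from $3$ up to $n-1$ through $v$, and $r=0$ is realised as well.

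It remains to handle length $n$, which is the case $r\equiv n-2\equiv -1$: we need $u_i\to v\to u_{i+1}$. The same disjointness argument applies verbatim for $r=-1$, since $A$ and $B-1$ disjoint again forces $|A|+|B|\le n-1$. So the pigeonhole step covers every residue uniformly, and $v$ can be inserted into $Q$ to give a hamiltonian cycle.

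The only delicate point is the bookkeeping of the cycle length $2+(i-j \bmod (n-1))$, together with checking that $i=j$ and the wrap-around case are handled consistently. I expect that bookkeeping, not any structural difficulty, to be the main thing to get right; I would check it on a small example, such as $n=4$, before writing the proof out.
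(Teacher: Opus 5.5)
Your proof is correct and is the standard pigeonhole proof of this lemma; the paper gives no proof of its own and only cites H\"{a}ggkvist and Thomassen.

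The key step is sound. For each residue $r$, the sets $A$ and $B+r$ in $\mathbb{Z}_{n-1}$ must meet, because $|A|+|B|=d(v)\ge n$; this equality holds since all neighbours of $v$ lie on $Q$ and $D$ is strict. The resulting $vQ[u_j,u_i]v$ is a genuine cycle of length $r+2$, because the segment has $r\le n-2$ arcs and so does not wrap around. Taking $r\in\{1,\dots,n-2\}$ gives all lengths $3,\dots,n$ at once.

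One bookkeeping fix is needed in your first paragraph. With $i-j\equiv\ell-1$ the cycle has length $\ell+1$, not $\ell+2$. So $\ell=1$ gives the $2$-cycle, not a cycle of length at least $3$, and $\ell=n-2$ gives length $n-1$, not $n$. Drop $\ell$ and index directly by $r$; this also removes the need to treat $r\equiv-1$ as a separate case.
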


Next, we turn to two
propositions that drive the subsequent reduction arguments.

\begin{proposition}\label{prop:nonheavy-segment}
	Let $D$ be a non-pancyclic digraph on $n$ vertcies, satisfying $d(x) \geq n$ and $d(y) \geq n$ for every dominating or dominated nonadjacent pair $\{x, y\}$.
	Suppose that $D$ contains an $(n-1)$-cycle $Q=v_0v_1\dots v_{n-2}v_0$, together with a vertex $u$ outside the cycle, and that $v_i \to u$ and $u \to v_j$, where $j \neq i+1$.
	If $\{v_{i+1}, v_{i+2}, \dots, v_{j-1}\} \to u$ or $u\to\{v_{i+1},v_{i+2},\dots, v_{j-1}\}$, then none of $v_{i+1}, v_{i+2},\dots, v_{j-1}$ is of degree at least $n$.
\end{proposition}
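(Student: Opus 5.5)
Suppose, for contradiction, that some $v_k$ with $i+1\le k\le j-1$ has $d(v_k)\ge n$. The plan is to build a new $(n-1)$-cycle $Q'$ that contains $u$ and misses exactly $v_k$. Then Lemma~\ref{lem:outside-heavy}, applied to $Q'$ and the outside vertex $v_k$, shows that $D$ is pancyclic, contradicting the hypothesis. All indices are read modulo $n-1$ along $Q$. Since $j\neq i+1$, the segment $S=v_{i+1}\cdots v_{j-1}$ is nonempty.

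\emph{Case $S\to u$.} Here $v_{k}\to u$ for every vertex $v_k$ of $S$, and also $u\to v_j$. Take
\[
Q'=v_{k+1}v_{k+2}\cdots v_{k-1}\,u\,v_j\,v_{j+1}\cdots ,
\]
more precisely as follows. We have $v_{k-1}\to u$ if $k-1\ge i+1$, and $v_i\to u$ if $k=i+1$. In either case $v_{k-1}\to u$, since $v_i\to u$ is given. Also $u\to v_j$. So the candidate cycle is $Q[v_j,v_{k-1}]\,u\,v_j$, which goes from $v_j$ around $Q$ through $v_i$ to $v_{k-1}$, then to $u$, then back to $v_j$. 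It omits $v_k,\dots,v_{j-1}$. This is only $n-1$ vertices when $k=j-1$. For $k<j-1$ we must also reinsert the path $v_{k+1}\cdots v_{j-1}$. Because $v_{j-1}\to u$, I would instead use
\[
Q[v_j,v_{k-1}]\;\text{then}\;\ldots
\]
The cleaner choice is to route $v_{k-1}\to$? That arc need not exist to $v_{k+1}$, so I would instead take the cycle $v_{k+1}v_{k+2}\cdots v_{j-1}\,u\,v_j\cdots v_{k-1}$ and close it with an arc $v_{k-1}\to v_{k+1}$. That arc is not guaranteed either. So the real mechanism must use heaviness and the BGL condition. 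Since $v_{k-1}\to u$ and $v_{k-1}\to v_k$, the vertices $u$ and $v_k$ have a common in-neighbour, so they are adjacent; indeed $v_k\to u$ holds anyway. I would pick $k$ extremal: let $k$ be the \emph{largest} heavy index in $S$. Then the cycle $Q[v_{k+1},v_{k-1}]$ closed by $v_{k-1}\to u\to v_{k+1}$ is needed, so I need $u\to v_{k+1}$. If $k=j-1$ this is the given arc $u\to v_j$. Otherwise I would try to get $u\to v_{k+1}$ from the BGL condition. Since $v_k\to u$ and $v_k\to v_{k+1}$, either $u$ and $v_{k+1}$ are adjacent, or $\{u,v_{k+1}\}$ is a dominated nonadjacent pair, which forces $d(v_{k+1})\ge n$ and contradicts the maximality of $k$. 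So they are adjacent. If $u\to v_{k+1}$ we are done. If only $v_{k+1}\to u$, I propagate the argument forward: the first index $m>k$ with $u\to v_m$ exists (at worst $m=j$). Then I try the cycle skipping $v_k$ as $Q[v_m,v_{k-1}]\,u\,v_m$ with the segment $v_{k+1}\cdots v_{m-1}$ inserted somewhere else. Arranging this insertion is exactly where the argument needs more care.

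\emph{Case $u\to S$.} This is symmetric. Reverse all arcs, or choose $k$ minimal and use the common out-neighbour $v_k\to$? Rather, $u\to v_k$ and $u\to v_{k-1}$ make $\{v_{k-1},\dots\}$ a dominating pair. In this case $v_{k-1}$ is forced adjacent to $v_{k+1}$-type vertices, and the cycle is closed as $v_{k-1}\to u\to v_{k+1}$.

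\textbf{Main obstacle.} The hard part is the reinsertion step: guaranteeing an $(n-1)$-cycle through $u$ that skips exactly $v_k$ when $v_k$ is not at an end of $S$. My first attempt would be the extremal choice of $k$ combined with repeated use of the BGL condition on the pairs $\{u,v_{k\pm1}\}$, forcing arcs $u\to v_{k+1}$ (resp.\ $v_{k-1}\to u$). I would fall back on a multi-insertion argument for the leftover segment if that fails.
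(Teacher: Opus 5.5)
Your extremal choice is a sound start. Taking $k$ to be the largest heavy index in the case $S\to u$ makes every vertex of $R=v_{k+1}\cdots v_{j-1}$ non-heavy, which is the mirror image of the paper's inductive hypothesis. The case $u\to S$ reduces to this one by reversing all arcs, so your second case is not needed. Your cycle $Q[v_j,v_{k-1}]\,u\,v_j$ correctly settles $k=j-1$.

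For $k<j-1$, however, the proposal stops exactly where the proof has to happen. The arc $u\to v_{k+1}$ cannot be forced. Since $v_{k+1}\in S$, you already have $v_{k+1}\to u$, so your adjacency argument for $\{u,v_{k+1}\}$ gives nothing new, and nothing rules out $u\nrightarrow v_{k+1}$. What is missing is a proof that $R$ can be put back into the shortened cycle $H=Q[v_j,v_{k-1}]\,u\,v_j$, which omits exactly $V(R)\cup\{v_k\}$. The missing idea is to apply the $\BGL(n)$ condition to the non-heavy vertices of $R$ themselves, since a non-heavy vertex lies in no bad pair. Take a vertex $v_t$ of $R$ (so $v_t\to u$) and walk backwards along the current cycle from $u$:
\begin{itemize}
\item at an arc $xy$ with $v_t\to y$, the vertices $x$ and $v_t$ have the common out-neighbour $y$, so they are adjacent;
\item if $x\to v_t$, then $xy$ is an insertion partner;
\item if $v_t\to x$, the walk continues.
\end{itemize}
A complete failed walk gives $v_t\to V(K)$. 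You then have to test blocks $v_{t'}\cdots v_t$ in the same way, processing $R$ from $v_{j-1}$ leftwards and updating the cycle after each successful insertion.

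The second missing ingredient is the failure case, which your plan does not account for. Your plan was: build an $(n-1)$-cycle missing $v_k$, then apply Lemma~\ref{lem:outside-heavy}. But the insertion can genuinely fail, leaving a prefix $R^*=v_{k+1}\cdots v_t$ with $R^*\to V(K)$ for the current cycle $K$. Then no $(n-1)$-cycle avoiding $v_k$ is produced, and the contradiction must come directly. Consider the cycles $v_kv_{k+1}\cdots v_{t'}\,x\,K[x,v_{k-1}]\,v_k$ with $v_{t'}\in V(R^*)$ and $x\in V(K)$. They use the heavy vertex $v_k$ itself and the $Q$-arc $v_{k-1}v_k$, and realise every length from $3$ to $|R^*|+|K|+1=n$, contradicting non-pancyclicity. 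Without this dichotomy the argument is incomplete: either the insertion succeeds and Lemma~\ref{lem:outside-heavy} applies, or $D$ is pancyclic by explicit cycles. ``Fall back on a multi-insertion argument'' names the tool but supplies neither the insertion mechanism nor the failure case.
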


\begin{proof}
	By the assumption of the proposition, $D$ is a non-pancyclic digraph.
	At this point, $D$ contains an $(n-1)$-cycle $Q=v_0v_1\dots v_{n-2}v_0$,
	together with a vertex $u$ outside the cycle, such that $v_i \to u$, $u \to v_j$,
	where $ j\neq i+1$.
	Each of $v_{i+1}, v_{i+2}, \dots, v_{j-1}$ is adjacent to $u$
	via an arc of the same direction. Reversing every arc preserves total degrees, heaviness, pancyclicity, and the hypothesis of the proposition, while exchanging the two alternatives in its conclusion. 
	Hence we may assume that all these vertices are out-neighbors of $u$.
	
	Consider the vertex $v_{i+1}$. We see that $V(D)\setminus\{v_{i+1}\}$ forms an
	$(n-1)$-cycle $Q_{i+1} = v_{i}  u v_{i+2}v_{i+3} \dots v_{i-1}v_{i}$.
	By Lemma~\ref{lem:outside-heavy}, if $d(v_{i+1}) \geq n$, then $D$ is pancyclic, contradicting the assumption. Therefore, $v_{i+1}$ cannot be a heavy vertex.
	If $j=i+2$, this proves the proposition. Hence assume $j\geq i+3$.
	We next show that $v_{i+2}$ is not a heavy vertex either.

	Since $V(D)\setminus\{v_{i+1},v_{i+2}\}$ contains the cycle $Q_{i+2} = v_{i}  u v_{i+3}v_{i+4} \dots v_{i-1}v_{i}$, and $u \to v_{i+1}$ is an arc of $D$, we proceed as follows.
	Because $u \to v_{i+1}$ and $u \to v_{i+3}$, if there is no arc between $v_{i+1}$ and $v_{i+3}$,
	then $\{v_{i+1}, v_{i+3}\}$ is a dominating nonadjacent pair. In that case, $v_{i+1}$ would be a heavy vertex, a contradiction.
	Hence there must be an arc between $v_{i+1}$ and $v_{i+3}$.
	
	\caseheading{Case 1.1.} Suppose that $v_{i+1} \to v_{i+3}$. Since
	$u \to v_{i+1}$, the vertex $v_{i+1}$ can be inserted into the cycle $Q_{i+2}$,
	giving the $(n-1)$-cycle $Q'_{i+2} = v_{i}  u v_{i+1}v_{i+3} \dots v_{i-1}v_{i}$,
	which omits only $v_{i+2}$. If $d(v_{i+2}) \geq n$, Lemma~\ref{lem:outside-heavy} would imply that $D$ is pancyclic, a contradiction. Therefore, $v_{i+2}$ is not a heavy vertex.
	
	\caseheading{Case 1.2.} Suppose that $v_{i+3}\to v_{i+1}$. Since
	$v_{i+3}\to v_{i+4}$, we have
	$v_{i+3}\to\{v_{i+1},v_{i+4}\}$. Let $r$ successively ranges over
	\[
	\{i+3,i+4,\ldots,i-2\},
	\]
	where the indices are read in the cyclic order of $Q$ and are taken
	modulo $n-1$. Whenever $v_r\to v_{i+1}$, the vertices $v_{i+1}$ and
	$v_{r+1}$ must be adjacent; otherwise they form a dominating
	nonadjacent pair with common in-neighbor $v_r$, forcing the non-heavy
	vertex $v_{i+1}$ to be heavy. If $v_{r+1}\to v_{i+1}$, the argument
	continues with $r+1$.
	
	Suppose that there exists $0 \le q \le n-2$ with
	$q\notin\{i,i+1,i+2,i+3\}$ such that
	\[
	\{v_{i+3},\ldots,v_{q-1}\}\to v_{i+1}
	\quad\text{and}\quad v_{i+1}\to v_q.
	\]
	Then $v_{i+1}$ can be inserted into $Q_{i+2}$, yielding the
	$(n-1)$-cycle
	\[
	Q'_{i+2}
	=v_iuv_{i+3}\cdots v_{q-1}v_{i+1}v_q\cdots v_{i-1}v_i,
	\]
	which omits only $v_{i+2}$. Lemma~\ref{lem:outside-heavy} therefore
	implies that $v_{i+2}$ is non-heavy. If no such $v_q$ exists, then
	every vertex of the $(n-1)$-cycle $Q_{i+1}$ other than $v_{i+2}$ sends an arc to
	$v_{i+1}$, while $v_{i+1}\to v_{i+2}$. Hence $D$ contains a cycle of
	every length from $3$ to $n$ through $v_{i+1}$ and is pancyclic, again a contradiction.
	Thus, in every case, we either lead to a contradiction or show that $v_{i+2}$ is not heavy.
	
	We now prove that $v_p$ is not a heavy vertex for every $i+1 \leq p \leq j-1$.
	We use induction on $p$. The vertices $v_{i+1}$ and $v_{i+2}$ have
	already been dealt with. Let $i+3\leq p\leq j-1$, and assume that
	$v_{i+1},\ldots,v_{p-1}$ are non-heavy. Consider
	\[
	Q_p=v_iuv_{p+1}v_{p+2}\cdots v_j\cdots v_{i-1}v_i.
	\]
	For each $t\in\{i+1,\ldots,p-1\}$, the vertices $v_t$ and
	$v_{p+1}$ have the common in-neighbor $u$. Hence they are adjacent,
	since $v_t$ is non-heavy.
	
	If $v_{p+1}$ has an in-neighbor in
	$\{v_{i+1},\ldots,v_{p-1}\}$, let
	\[
	\beta=\max\{t\in\{i+1,\ldots,p-1\}:v_t\to v_{p+1}\}.
	\]
	Thus $i+1\leq\beta\leq p-1$ and
	$v_{p+1}\to\{v_{\beta+1},\ldots,v_{p-1}\}$. The path
	$v_{i+1}\cdots v_\beta$ can be inserted between $u$ and $v_{p+1}$,
	giving the cycle $H$ and the path $R$ as follows:
	\[
	H=v_iuv_{i+1}\cdots v_\beta v_{p+1}v_{p+2}
	\cdots v_j\cdots v_{i-1}v_i, \qquad R=v_{\beta+1}\cdots v_{p-1} .
	\]
	
	If $v_{p+1}$ has no
	in-neighbor in $\{v_{i+1},\ldots,v_{p-1}\}$, put instead
	\[
	H=Q_p,\qquad R=v_{i+1}\cdots v_{p-1}.
	\]
	In both cases,
	\[
	V(H)\mathbin{\dot\cup}V(R)=V(D)\setminus\{v_p\},
	\qquad u\to V(R),
	\]
	and all vertices of $R$ are non-heavy.
	If $R$ is empty, then $H$ is an $(n-1)$-cycle omitting $v_p$, and
	Lemma~\ref{lem:outside-heavy} immediately implies that $v_p$ is
	non-heavy. We may therefore assume that $R$ is nonempty.
	
	We claim that $R$ can be successively inserted into $H$.
	Let $v_s$ be the first vertex of $R$ not yet inserted, and let $K$ be the cycle obtained by inserting into $H$ all vertices preceding $v_s$. Starting with $u\to v_s$, traverse $K$ in its directed order.
	If $xy\in A(K)$ and $x\to v_s$, then $x$ is a common in-neighbor of
	$v_s$ and $y$. Since $v_s$ is non-heavy, the $\BGL(n)$ condition forces
	$v_s$ and $y$ to be adjacent. Thus $v_s\to y$ makes $xy$ an insertion
	partner of $v_s$, whereas $y\to v_s$ allows the traversal to continue.
	Consequently, either $v_s$ is inserted or a complete traversal gives
	$K\to v_s$.
	
	In the latter case, test the blocks $R[v_s,v_t]$ for
	$t=s+1,\ldots,p-1$. For a fixed $t$, start with $u\to v_t$ and traverse
	$K$. At an arc $xy$ with $x\to v_t$, the non-heaviness of $v_t$ again
	forces $v_t$ and $y$ to be adjacent. If $v_t\to y$, then $xy$ is an
	insertion partner of $R[v_s,v_t]$, because $K\to v_s$; if
	$y\to v_t$, the traversal continues. Hence a block is inserted, or the
	complete traversal gives $K\to v_t$. After each insertion, replace $K$
	by the resulting cycle and continue from the first remaining vertex of
	$R$. If this process failed to exhaust $R$, it would leave a nonempty
	suffix
	\[
	R^*=v_s v_{s+1}\cdots v_{p-1}
	\]
	with $V(K)=V(Q_p)\cup \{v_{i+1}, \dots v_{s-1} \}$ and $K\to R^*$.
	
	Such a suffix is impossible. For every $t\in\{s,\ldots,p-1\}$ and
	every $x\in V(K)$,
	\[
	v_t v_{t+1}\cdots v_p K[v_{p+1},x]v_t
	\]
	is a directed cycle, using $v_p\to v_{p+1}$ and $x\to v_t$; here
	$K[v_{p+1},v_{p+1}]$ denotes the one-vertex segment. The length of
	the first segment $v_t v_{t+1}\cdots v_p$ ranges over every integer from $1$ to $|R^*|$, 
	and the length of the segment of $K$ ranges over every integer from $0$ to $|K|-1$. 
	Hence, together with the two arcs $v_pv_{p+1}$ and $xv_t$, 
	the total length of the cycle ranges over every integer from $1+0+2=3$ to 
	\[
	|R^*|+|K|-1+2=n.
	\]
	All these cycles realize every length from $3$ to $n$, contradicting the
	non-pancyclicity of $D$.
	
	Therefore, the process exhausts $R$ and insert all vertices of it
	into $K$, producing an
	$(n-1)$-cycle $Q'_p$ with
	\[
	V(Q'_p)=V(D)\setminus\{v_p\}.
	\]
	Lemma~\ref{lem:outside-heavy} now implies that $v_p$ is non-heavy.
	
	Therefore, by mathematical induction, none of $v_{i+1}, v_{i+2}, \dots, v_{j-1}$ is a heavy vertex,
	and Proposition~\ref{prop:nonheavy-segment} holds.
\end{proof}

\begin{proposition}\label{prop:opposite-heavy}
	Let $D$ be a digraph satisfying $d(x) \geq n$ and $d(y) \geq n$ for every dominating or dominated nonadjacent pair $\{x, y\}$.
	Suppose that $D$ contains an $(n-1)$-cycle, the vertex $u$ outside the cycle is of degree $d(u)<n$, 
	and there is a vertex $w$ on the cycle with $d(w)\ge n$ such that $u \leftrightarrow w$. Then $D$ is pancyclic. 
\end{proposition}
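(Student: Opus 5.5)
We argue by contradiction: suppose $D$ is not pancyclic. Let $Q=v_0v_1\cdots v_{n-2}v_0$ be the $(n-1)$-cycle and $w=v_k$ with $u\leftrightarrow v_k$ and $d(v_k)\ge n$. The aim is to produce an $(n-1)$-cycle avoiding $v_k$; Lemma~\ref{lem:outside-heavy} applied to that cycle and the heavy outside vertex $v_k$ then shows that $D$ is pancyclic, a contradiction. The main tool is Proposition~\ref{prop:nonheavy-segment}, applied in both orientations. Since $v_k\to u$ and $u\to v_k$, we look at the maximal arcs of $Q$ through $v_k$ along which $u$ is consistently dominated or dominating.

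First, if $v_{k-1}\to u$, then $v_{k-1}\to u\to v_{k+1}$ would already give the desired cycle $v_{k-1}uv_{k+1}\cdots v_{k-1}$ provided $u\to v_{k+1}$. So assume not both hold. By symmetry (arc reversal), suppose $u\nrightarrow v_{k+1}$. Walk backwards from $v_k$: let $i$ be the largest index before $k$ (cyclically) with $v_i\to u$ and $i\ne k$. Then the vertices strictly between $v_i$ and $v_k$ either all receive arcs from $u$ or are nonadjacent to $u$. To control these vertices, use the $\BGL(n)$ condition with the non-heavy vertex $u$. For any arc $xy$ of $Q$ with $x\to u$, the vertex $x$ is a common in-neighbor of $u$ and $y$, so $u$ and $y$ must be adjacent. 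Symmetrically, if $u\to y$ then $u$ and $x$ are adjacent. Starting from $v_k\to u$, this forces $u$ to be adjacent to $v_{k+1}$, hence $v_{k+1}\to u$ since $u\nrightarrow v_{k+1}$. Iterating, $u$ is adjacent to every vertex of $Q$: going forward from $v_k$, the in-arcs into $u$ propagate until the first index $j$ with $u\to v_j$.

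Now consider the segment $v_{k+1},\dots,v_{j-1}$, all of which send arcs to $u$, with $v_k\to u$ and $u\to v_j$. We have $j\ne k+1$, so Proposition~\ref{prop:nonheavy-segment} (with $i=k$) shows that these vertices are non-heavy. This alone does not handle $v_k$. So instead we use the arc $u\to v_k$. Going backward from $v_k$, adjacency propagates in the same way, and we take the last index $i'<k$ with $v_{i'}\to u$ such that $u\to\{v_{i'+1},\dots,v_{k}\}$. If $i'=k-1$, then $v_{k-1}\to u\to v_{k+1}$ is needed, which fails. In that case we instead splice: the cycle $v_{k-1}\,u\,v_{j}\cdots$ misses the segment $v_k,\dots,v_{j-1}$, and we reinsert $v_{k+1},\dots,v_{j-1}$, which are non-heavy, exactly by the successive insertion procedure from the proof of Proposition~\ref{prop:nonheavy-segment}. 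Either the insertion succeeds, giving an $(n-1)$-cycle missing only $v_k$, or a leftover suffix $R^*$ with $K\to R^*$ appears, which yields cycles of every length from $3$ to $n$ as in that proof. Both outcomes contradict non-pancyclicity.

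The main obstacle is this reinsertion step. There one must check that each vertex being reinserted is non-heavy, so that the $\BGL(n)$ condition forces adjacency along the traversal, and that the block-insertion argument still closes when the omitted vertex is $v_k$ rather than an interior segment vertex. In the remaining case, where $u\to v_{k+1}$ and $v_{k-1}\to u$ both fail, we use arc reversal to move to the mirror situation, where the same reinsertion applies to the backward segment.
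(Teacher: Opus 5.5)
\textbf{There is a genuine gap: your last sentence dismisses the hardest case, and the arc-reversal step it relies on is vacuous.}

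Your subcase $v_{k-1}\to u$ (with $u\nrightarrow v_{k+1}$) is fine, only roundabout. There you have $v_{k-1}\to u$, $\{v_k,\dots,v_{j-1}\}\to u$ and $u\to v_j$. So Proposition~\ref{prop:nonheavy-segment} applied to $Q[v_{k-1},v_j]$ already says $v_k$ is not heavy, a contradiction. Your splice-and-reinsert just re-runs that proposition's proof. (The propagation also does not make $u$ adjacent to all of $Q$; it stops at the first $v_j$ with $u\to v_j$. You never use that claim.)

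The gap is the case $v_{k-1}\nrightarrow u$ and $u\nrightarrow v_{k+1}$, i.e.\ $u\to v_{k-1}$ and $v_{k+1}\to u$. Reversing all arcs swaps the predecessor and successor of $v_k$ and flips both arcs. So this configuration is mapped to itself, and there is no ``mirror situation'' to move to. Propagating gives
\[
v_\alpha\to u\to\{v_{\alpha+1},\dots,v_k\},\qquad \{v_k,\dots,v_{\beta-1}\}\to u\to v_\beta .
\]
Proposition~\ref{prop:nonheavy-segment} on $Q[v_\alpha,v_k]$ and $Q[v_k,v_\beta]$ shows only that $L=v_{\alpha+1}\cdots v_{k-1}$ and $R=v_{k+1}\cdots v_{\beta-1}$ are non-heavy. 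It never reaches $v_k$, because along $Q[v_\alpha,v_\beta]$ the direction of the arcs with $u$ switches at $v_k$. The cycle avoiding $v_k$ that the known arcs provide is $K_0=v_\alpha uv_\beta\cdots v_{\alpha-1}v_\alpha$. It omits \emph{both} $L$ and $R$, so a one-sided reinsertion cannot produce an $(n-1)$-cycle missing only $v_k$.

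What is missing is a two-sided insertion into one common cycle. $L$ (out-neighbours of $u$) goes in by forward scans using common in-neighbours. $R$ (in-neighbours of $u$) goes in by backward scans using common out-neighbours. You then need to handle each way this can fail:
\begin{enumerate}
\item If $L$ goes in but a prefix $R^*$ with $R^*\to K$ is left, the cycles $uv_kv_{k+1}\cdots v_qK[y,u]$ give all lengths.
\item If a suffix $L^*$ (with $u\to L^*$) and a prefix $R^*$ (with $R^*\to K$) are both left, you need the cycles $uQ[a,b]K[y,u]$ with $a\in L^*$, $b\in R^*$, which pass through $v_k$. The triangle $uv_{k-1}v_ku$ supplies length $3$.
\item If $R$ goes in but $L^*$ is left, a second forward scan and the cycles $v_q\cdots v_kK[v_{k+1},x]v_q$ finish the argument.
\end{enumerate}
Only when everything is inserted do you get an $(n-1)$-cycle missing $v_k$ and can invoke Lemma~\ref{lem:outside-heavy}. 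The mixed failure case has no counterpart in the proof of Proposition~\ref{prop:nonheavy-segment}, so it cannot be obtained by citing that proposition. This is the real content of the statement, and your proposal does not supply it.
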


\begin{proof}
	Write the $(n-1)$-cycle as $Q=v_0v_1\dots v_{n-2}v_0$, with all subscripts taken modulo $n-1$, 
	and let $v_i$ be an arbitrary heavy vertex on $Q$ such that $v_i \leftrightarrow u$.
	Since $v_{i-1} \to v_i$ and $v_i \to v_{i+1}$, and since $u$ is not heavy, 
	the vertex $u$ must be adjacent to both $v_{i-1}$ and $v_{i+1}$
	to avoid being in a bad pair. We consider the following four cases
	regarding the direction of the arcs between $u$ and $\{v_{i-1}, v_{i+1}\}$.
	
	\caseheading{Case 1.1.} Suppose that $v_{i-1} \to u$ and $u \to v_{i+1}$. 
	Then the $(n-1)$-cycle $Q'_i = v_{i-1} u v_{i+1} \dots v_{i-2} v_{i-1}$ avoids $v_i$.
	Since $d(v_i) \geq n$, Lemma~\ref{lem:outside-heavy} shows that $D$ is pancyclic.
	
	\caseheading{Case 1.2.} Suppose that $v_{i-1} \to u$ and $u \nrightarrow v_{i+1}$, 
	and therefore $v_{i+1} \to u$.
	Since $v_{i+1} \to v_{i+2}$ and $u$ is not heavy, $u$ must be adjacent to $v_{i+2}$. 
	And if $v_{i+2} \to u$, the adjacency of $u$ can be extended to $v_{i+3}$. 
	Similarly, we can successively extend the adjacency of $u$ to the right of $v_{i+1}$, 
	until we encounter $v_{\beta}$, the first vertex for which $v_{\beta} \nrightarrow u$,
	and thus $u \to v_{\beta}$ exists.
	Then the vertices from $v_{i-1}$ to $v_{\beta}$, together with $u$, 
	necessarily form the configuration described in Proposition~\ref{prop:nonheavy-segment}. 
	Now assume that $D$ is non-pancyclic, by Proposition~\ref{prop:nonheavy-segment} 
	$v_i$ can not be heavy, a contradiction. Therefore $D$ is pancyclic.
	
	\caseheading{Case 1.3.} Suppose that $v_{i-1} \nrightarrow u$ and $u \to v_{i+1}$, and therefore $u\to v_{i-1}$.
	Since $v_{i-2} \to v_{i-1}$ and $u$ is not heavy, $u$ must be adjacent to $v_{i-2}$. 
	And if $u\to v_{i-2}$, the adjacency of $u$ can be extended to $v_{i-3}$.
	Similarly, we can continuously extend the adjacency of $u$ to the left of $v_{i-1}$, 
	until we meet $v_{\alpha}$, the first vertex for which $u \nrightarrow v_{\alpha}$,
	and thus $v_{\alpha} \to u$.
	The vertices from $v_{\alpha}$ to $v_{i+1}$, together with $u$,
	then necessarily form the configuration in Proposition~\ref{prop:nonheavy-segment}.
	Now assume that $D$ is non-pancyclic, by Proposition~\ref{prop:nonheavy-segment} $v_i$ is not heavy, a contradiction.
	Therefore $D$ is pancyclic.
	
	\caseheading{Case 1.4.} It remains to consider $v_{i-1} \nrightarrow u$ and $u \nrightarrow v_{i+1}$.
	In this case $u \to v_{i-1}$ and $v_{i+1} \to u$.
	By arguments analogous to Case 1.3 and Case 1.4, the adjacency with $u$ propagates
	in both directions along $Q$.
	Following the propagation backward from $u\to v_{i-1}$ we eventually reach a vertex 
	$v_\alpha$ such that $v_\alpha \to u$,
	and carrying the propagation forward from $v_{i+1} \to u$ we 
	eventually reach a vertex $v_\beta$ such that $v_\beta \to u$,
	and 
	\[
	v_\alpha\to u\to\{v_{\alpha+1},\ldots,v_{i-1}, v_i\},
	\qquad
	\{v_i, v_{i+1}, \ldots,v_{\beta-1}\}\to u\to v_\beta.
	\]
	Put
	\[
	L=v_{\alpha+1}v_{\alpha+2}\cdots v_{i-1},
	\qquad
	R=v_{i+1}v_{i+2}\cdots v_{\beta-1}.
	\]
	Since $u \to v_{i-1}$ and $v_{i+1} \to u$, $L$ and $R$ are nonempty.
	
	Proposition~\ref{prop:nonheavy-segment}, applied to the two segments
	$Q[v_\alpha,v_i]$ and $Q[v_i,v_\beta]$, shows that every vertex of
	$L\cup R$ is non-heavy. We start from the cycle
	\[
	K_0=v_\alpha u v_\beta v_{\beta+1}\cdots
	v_{\alpha-1}v_\alpha,
	\]
	which omits exactly $L$, $v_i$, and $R$, and insert
	all vertices of $L$ and $R$ into $K_0$.
	
	First scan $L$ from left to right and insert its vertecies
	into $K_0$ sequentially. Let $v_s$ be its first uninserted
	vertex and let $K$ be the current cycle. Starting with $u\to v_s$,
	traverse $K$. On an arc $xy$ of $K$ with $x\to v_s$, the non-heavy vertex
	$v_s$ must be adjacent to $y$, since $x$ is their common in-neighbor.
	Thus either $v_s\to y$, making $xy$ an insertion partner of $v_s$, or
	$y\to v_s$ and the scan continues. If no partner occurs, then
	$K\to v_s$. We may then advance the scan one step to the right to $v_{s+1}$.
	On an arc $xy$ with $x\to v_{s+1}$, their common in-neighbor $x$ forces
	$v_{s+1}$ and $y$ to be adjacent. Thus $v_{s+1}\to y$ inserts
	$v_s v_{s+1}$, whereas $y\to v_{s+1}$ continues
	the scan; a complete insertion-failed scan gives $K\to v_{s+1}$,
	and we proceed to the next vertex. 
	Repeating the scan either places all of $L$ into the cycle
	or leaves a suffix
	\[
	L^*=v_s v_{s+1}\cdots v_{i-1}
	\]
	with $K\to L^*$.
	
	Now scan $R$ from right to left and insert its verticies into $K$ sequentially.
	For its last uninserted vertex $v_t$,
	start with $v_t\to u$ and traverse $K$ backwards. On an arc $xy$
	with $v_t\to y$, the vertices $x$ and $v_t$ have the common
	out-neighbor $y$ and must be adjacent. Hence $x\to v_t$ gives an
	insertion partner $xy$ for $v_t$, while $v_t\to x$ continues the scan. An insertion-failed scan
	gives $v_t\to K$. In the latter case move one step to the left to $v_{t-1}$ and scanning
	backwards from $v_{t-1}\to u$, the common out-neighbor $y$ at each arc
	$xy$ forces $x$ and $v_{t-1}$ to be adjacent. 
	Thus any $x\to v_{t-1}$ gives an insertion partner $xy$
	for $v_{t-1}v_t$ because $v_t\to y$,
	while a complete insertion-failed scan proves $v_{s-1}\to K$,
	and we proceed to the next vertex.
	After each insertion we restart with the last remaining vertex,
	which is still denoted by $v_t$.
	Consequently, either all of $R$ is inserted or a prefix
	\[
	R^*=v_{i+1}v_{i+2}\cdots v_t
	\]
	remains with $R^*\to K$.
	
	We now prove that if such a prefix $R^*$ remains then $D$ is pancyclic, thus the conclusion holds. 
	
	If $L$ has already been inserted, then
	\[
	u v_i v_{i+1}\cdots v_q K[y,u],
	\qquad i+1\leq q\leq t,\quad y\in V(K),
	\]
	gives every cycle length from $3$ to $n$ as shown below.
	The length of the two variable parts $ v_i v_{i+1}\dots v_q$ and $K[y,u]$ can be
	$1,\ldots,|R^*|-1$ and $0,\ldots,|K|-1$, respectively.
	Hence, together with the two arcs $uv_i$ and $v_qy$,
	the sequence gives cycles cycles of length from $1+0+2=3$ to $(|R^*|-1)+(|K|-1)+2=n$.
	So $D$ is pancyclic.
	
	If a suffix $L^*$ also remains, then
	\[
	uQ[a,b]K[y,u],
	\qquad a\in V(L^*),\quad b\in V(R^*),\quad y\in V(K),
	\]
	gives all lengths from $4$ to $n$ as explained below, while
	$u v_{i-1}v_i u$ gives length $3$. 
	The length of the variable part $Q[a,b]$ ranges over
	every integer from $2$ to $|L^*|+|R^*|$,
	and the length of $K[y,u]$ ranges over every integer
	from $0$ to $|K|-1$, together with the arcs $ua$ and $by$
    they implement every length from $2+0+2=4$ to 
	$|K|-1+|L^*|+|R^*|+2=|K|+|L^*|+|R^*|+1=n$. Hence
	$D$ is again pancyclic.
	
	Now we may assume that no such prefix $R^*$ remains.
	If a suffix $L^*$ of $L$ remains, run the forward scan once more on the enlarged
	cycle $K$, which now contains $R$. Similarly, an insertion-failure scan would leave
	$L^{**}=v_s\cdots v_{i-1}$ with $K\to L^{**}$, and the cycles
	\[
	v_qv_{q+1}\cdots v_iK[v_{i+1},x]v_q,
	\qquad s\leq q\leq i-1,\quad x\in V(K),
	\]
	would have every length from $3$ to $n$. Indeed, the two variable
	parts $v_{q}v_{q+1}\dots v_i$ and $K[v_{i+1},x]$ have lengths $1,\ldots,|L^{**}|$ and
	$0,\ldots,|K|-1$. 
	Together with the arcs $v_i v_{i+1}$ and $xv_q$,
	the cycles implements every length from $1+0+2=3$ to 
	$|L^{**}|+|K|-1+2=|L^{**}|+|K|+1=n$. 
	Thus $D$ is pancyclic as required.
	
	Finally we can assume a successful insertion which insert the vertex
	set of the two paths $L$ and $R$ into $K_0$, and gives an $(n-1)$-cycle $Q'_i$ with
	\[
	V(Q'_i)=V(D)\setminus\{v_i\}.
	\]
	Since $v_i$ is heavy, Lemma~\ref{lem:outside-heavy} now shows that $D$
	is pancyclic.
	Thus Proposition~\ref{prop:opposite-heavy} holds.
\end{proof}

Now we are ready for the proof of the main theorem.
\begin{proof} [Proof of Theorem \ref{thm:main}]
		The preparatory results in Section~\ref{sec:n-1} now apply. 
		By Theorem~\ref{thm:n-minus-one}, a digraph $D$ satisfying 
		the $\BGL(n)$ condition either contains an $(n-1)$-cycle,
		belongs to $\mathcal{D}_B$, or isomorphic to $C_n$, which is a member of $\mathcal{D}_L$. 
		In the latter two cases, $D$ itself is exceptional, thus the conclusion of Theorem~\ref{thm:main} is immediate. 
		We therefore assume from now on that $D$ contains an $(n-1)$-cycle.
		
		Let $D_0=D$. Choose an $(n-1)$-cycle in $D_0$, denote it by $Q_{n-1}$, and let $u_0$ be the unique vertex outside this cycle.
		
		If $u_0$ is a heavy vertex of $D_0$, then Lemma~\ref{lem:outside-heavy} shows that $D_0$ is pancyclic, and hence Theorem~\ref{thm:main} follows.
		
		If $u_0$ is not heavy, but $u_0$ is joined by a pair of opposite arcs to some heavy vertex on $Q_{n-1}$, 
		then Proposition~\ref{prop:opposite-heavy} again implies that $D_0$ is pancyclic.
		Thus Theorem~\ref{thm:main} also follows in this case.
		
		It remains to consider the following situation: $u_0$ is not heavy, 
		and there is no pair of opposite arcs between $u_0$ and any heavy vertex on $Q_{n-1}$. 
		We show that the subdigraph $D_0-u_0$ satisfies the $\BGL(n-1)$ condition. 
		
		Take any $w\in V(D_0-u_0)$ which belongs to a bad pair in $D_0-u_0$. 
		Note that deleting $u_0$ creates no new bad pairs.
		Hence $w$ belongs to the same bad pair in $D_0$.
		Since $D_0$ satisfies the $\BGL(n)$ condition, $w$ must be heavy in $D_0$, that is,
		\[
		d_{D_0}(w)\geq |D_0|=n .
		\]
		By the assumption of this case, no heavy vertex in $V(D_0-u_0)$ is joined to $u_0$ by a pair of opposite arcs. 
		Therefore, deleting $u_0$ removes at most one arc incident with $w$. Consequently,
		\[
		d_{D_0-u_0}(w)\geq d_{D_0}(w)-1\geq n-1=|D_0-u_0| .
		\]
		Hence $w$ remains a heavy vertex of $D_0-u_0$,
		which means that any vertex belongs to a bad pair in $D_0-v_0$
		is heavy in $D_0-v_0$.
		So $D_0-u_0$ satisfies the $\BGL(n-1)$ condition. Set
		\[
		D_1=D_0-u_0.
		\]
		We now construct recursively a sequence of digraphs
		\[
		D_0,D_1,\ldots,D_p, p \leq n-3
		\]
		such that, 
		\begin{enumerate}
		\item for every $D_i$, $0 \leq i \leq p$, we have
		\[
		|D_i|=n-i,
		\qquad
		D_i\text{ satisfies the }\BGL(n-i)\text{ condition};
		\]
		\item for $0 \leq i \leq p-1$, the digraph $D_i$ contains a cycle of length $n-i-1$, which is denoted by $Q_{n-i-1}$ 
		and the unique vertex in $V(D_i)\setminus V(Q_{n-i-1})$ is denoted by $u_i$, we have
		\[
		D_{i+1}=D_i-u_i;
		\]
		\item $u_i$ is not a heavy vertex of $D_i$, and no heavy vertex on $Q_{n-i-1}$ is joined to $u_i$ by a pair of opposite arcs.
	\end{enumerate}
		
		The digraphs $D_0$ and $D_1$ have already been constructed. Suppose that $D_i$ has been defined. Since $|D_i|=n-i$ and $D_i$ satisfies the $\BGL(n-i)$ condition, Theorem~\ref{thm:n-minus-one} says that $D_i$ either contains an $(n-i-1)$-cycle $Q_{n-i-1}$, 
		or belongs to an exceptional class of the corresponding order. In the first case, we continue by examining the vertex $u_i$ outside the cycle.
		
		If $u_i$ is a heavy vertex of $D_i$, then Lemma~\ref{lem:outside-heavy} implies that $D_i$ is pancyclic, and the sequence stops at $D_i$.
		
		If $u_i$ is not heavy, but it is joined by a pair of opposite arcs to some heavy vertex of $D_i-u_i$, then Proposition~\ref{prop:opposite-heavy} shows that $D_i$ is also pancyclic, and again the sequence stops at $D_i$.
		
		Suppose now that $u_i$ is not heavy and that no heavy vertex of $D_i-u_i$ is joined to $u_i$ by a pair of opposite arcs. 
		We show that the induced subdigraph $D_i-u_i$ satisfies the $\BGL(n-i-1)$ condition. 
		
		Take any $w\in V(D_i-u_i)$ which belongs to a bad pair in $D_i-u_i$. 
		Note that deleting $u_i$ creates no new bad pairs, so $w$ belongs to
		the same bad pair in $D_i$.
		And since $D_i$ satisfies the $\BGL(n-i)$ condition, $w$ must be heavy in $D_i$,
		that is
		\[
		d_{D_i}(w)\geq |D_i|=n-i .
		\]
		
		After $u_i$ is deleted, the absence of a pair of opposite arcs between $u_i$ 
		and any heavy vertex means that $w$ loses at most one in-arc or out-arc.
		Hence
		\[
		d_{D_i-u_i}(w)\geq d_{D_i}(w)-1\geq n-i-1 =|D_i-u_i|.
		\]
		Thus $w$ remains a heavy vertex of $D_i-u_i$. 
		Since $w$ is an arbitrarily chosen vertex belonging to a bad pair in $D_i-u_i$,
		$D_i-u_i$ satisfies the $\BGL(n-i-1)$ condition. We may thus set
		\[
		D_{i+1}=D_i-u_i,
		\]
		and continue the recursive construction.
		
	According to the preceding discussion, the construction either stops at a pancyclic digraph $D_p$ with $p \leq n-3$, terminates at an exceptional digraph $D_p$,
	or continues all the way to $p=n-3$. If some $D_p$ is pancyclic, then it contains cycles of every length from $3$ to $n-p$. During the construction we have, in turn, found cycles of lengths $n-1,\ldots,n-p+1$, while the existence of an $n$-cycle is guaranteed by Theorem~\ref{thm:BGL-hamiltonian1}. Hence $D=D_0$ is pancyclic. If the sequence continues to $p=n-3$, the same reasoning applies: the construction provides cycles of lengths $n-1,\ldots,3$, and Theorem~\ref{thm:BGL-hamiltonian1} supplies an $n$-cycle. Thus $D_0$ is pancyclic.
	
	We are left with the case in which $D_p$ is an exceptional digraph. We divide the argument into two cases.
		
		\caseheading{Case 1. $D_p\in\mathcal{D}_B$.}
		
		If $p=0$, then $D_0\in\mathcal{D}_B$, and the conclusion of Theorem~\ref{thm:main} is immediate.
		Assume from now on that $p\geq1$. Since the sequence continues to at most $p=n-3$, $|D_p|\ge 3$.
		As $D_p\in\mathcal{D}_B$ is balance bipartite, $|D_p|$ must be even and thus $|D_p|\ge 4$.
		Let $A$ and $B$ be the two partite sets of $D_p$, where
		\[
		|A|=|B|=a\geq2,
		\]
		and write
		\[
		x=u_{p-1}.
		\]
		Choose any $v\in V(D_p)$. Since $a\geq2$, we may choose another vertex $w\neq v$ in the same partite set as $v$. Then $\{v,w\}$ is a bad pair in $D_{p-1}$. Moreover,
		\[
		d_{D_p}(v)=2a,
		\qquad
		|D_{p-1}|=2a+1.
		\]
		Because $D_{p-1}$ satisfies the $\BGL(2a+1)$ condition, $d_{D_p}(v)=2a+1$, and $v$ has be adjacent to $x$.
		Since $v$ is arbitrarily chosen, $x$ is adjacent to every vertex of $D_p$.
		
		Let
		\[
		O=N_{D_{p-1}}^+(x),
		\qquad
		I=N_{D_{p-1}}^-(x).
		\]
		Since $D_{p-1}$ is strongly connected and $x$ is adjacent to every vertex of $D_p$, we have
		\[
		O\neq\varnothing,
		\qquad
		I\neq\varnothing,
		\qquad
		O\cup I=V(D_p).
		\]
		It can be verified that we can choose vertices from $O$ and $I$ that lie in different partite sets. 
		After interchanging the names of $A$ and $B$ if necessary, we may assume that
		\[
		x^*\in O\cap A,
		\qquad
		y^*\in I\cap B.
		\]
		For each $1\leq r\leq a$, choose distinct vertices in $A$ and $B$ as follows:
		\[
		x_0=a^*,x_1,\ldots,x_{r-1}\in A,
		\qquad
		y_0,\ldots,y_{r-1}=y^*\in B.
		\]
		Since $D_p$ is a balanced complete bipartite digraph,
		\[
		x x_0 y_0 \cdots x_{r-1} y_{r-1} x
		\]
		is a cycle of length $2r+1$. Thus $D_{p-1}$ contains odd cycles of every length in
		\[
		3,5,\ldots,2a+1.
		\]
		On the other hand, $D_p$ itself contains even cycles of every length in
		\[
		4,6,\ldots,2a.
		\]
		Hence $D_{p-1}$ is pancyclic.
		
		Combining these cycles with the $(|D_i|-1)$-cycles chosen at the earlier recursive steps gives cycles of every length $3,4,\ldots,n-1$ in $D_0$. Finally, Theorem~\ref{thm:BGL-hamiltonian1} gives a cycle of length $n$ in $D_0$. Therefore, $D_0$ is pancyclic.
		
		\caseheading{Case 2. $D_p=C_{n-p}$.}
		
		We now prove that, when the deleted vertices are added back one at a time,
		each resulting digraph remains locally semicomplete,
		and it follows the original digraph $D_0$ is locally semicomplete. 
		It is equivalent to prove that adding back the deleted vertices produces no bad pair. 
		Since the recursion reaches $D_p$, for every $0\leq i<p$, the vertex $u_i$ is not heavy in $D_i$, and $D_i$ satisfies the $\BGL(n-i)$ condition.
		
		First, $D_p=C_{n-p}$ contains no bad pair. 
		
		Starting from $D_p$, add back the vertices
		\[
		u_{p-1},u_{p-2},\ldots,u_0
		\]
		one by one, together with the necessary arcs to reconstruct
		\[D_{p-1}, D_{p-2} \ldots, D_0.\]
		We use reverse induction on $i=p-1,p-2,\ldots,0$ to prove that $D_i$ contains no bad pair. Assume that $D_{i+1}$ contains no bad pair, and set
		\[
		F=D_{i+1},
		\qquad
		G=D_i=F+u_i.
		\]
		Let $|F|=m$, so $|G|=m+1$. Suppose, to the contrary, that $G$ contains a bad pair $\{x,y\}$.
		
		If $u_i\in\{x,y\}$, then, because $G=D_i$ satisfies the $\BGL(m+1)$ condition,
		\[
		d_G(u_i)\geq |D|=m+1.
		\]
		Thus $u_i$ is a heavy vertex of $D_i$, contradicting the condition imposed on this recursive branch.
		
		Therefore, $x,y\in V(F)$. Adding $u_i$ does not alter adjacency between old vertices, so $x$ and $y$ remain nonadjacent in $F$. Since $F$ contains no bad pair, $x$ and $y$ have neither a common out-neighbor nor a common in-neighbor in $F$; that is,
		\[
		N_F^+(x)\cap N_F^+(y)=\varnothing,
		\qquad
		N_F^-(x)\cap N_F^-(y)=\varnothing.
		\]
		Take any $z\in V(F)\setminus\{x,y\}$. Among the four possible arcs
		\[
		x\to z,
		\qquad
		y\to z,
		\qquad
		z\to x,
		\qquad
		z\to y,
		\]
		at most two can occur. Otherwise, if both $x\to z$ and $y\to z$ occur, then $z$ is a common out-neighbor of $x$ and $y$; if both $z\to x$ and $z\to y$ occur, then $z$ is a common in-neighbor of $x$ and $y$. Either possibility is a contradiction. Hence each $z\in V(F)\setminus\{x,y\}$ contributes at most $2$ to $d_F(x)+d_F(y)$. Since $x$ and $y$ are nonadjacent, it follows that
		\[
		d_F(x)+d_F(y)\leq 2(m-2).
		\]
		After $u_i$ is added back, there are at most two arcs between $u_i$ and $x$, and at most two arcs between $u_i$ and $y$. Therefore,
		\[
		\begin{aligned}
		d_G(x)+d_G(y)
		&\leq d_F(x)+d_F(y)+4\\
		&\leq 2(m-2)+4
		 =2m.
		\end{aligned}
		\]
		On the other hand, $\{x,y\}$ is a bad pair in $G$, and $G$ satisfies the $\BGL(m+1)$ condition. Hence
		\[
		d_G(x)\geq m+1,
		\qquad
		d_G(y)\geq m+1.
		\]
		Consequently,
		\[
		d_G(x)+d_G(y)\geq 2m+2,
		\]
		contradicting the preceding upper bound. Thus $G=D_i$ contains no bad pair. Reverse induction now shows that every $D_i$ contains no bad pair; in particular, $D_0$ contains no bad pairs and hence it is locally semicomplete.
		
		Therefore, in this case, if $D_0$ is not pancyclic, then it belongs to the exceptional class of non-pancyclic $\LSD$s,
		namely $\mathcal{D}_L$, allowed by the condition of the theorem.
		
		Combining the two cases, Theorem~\ref{thm:main} follows.
	\end{proof}
\section{Two Open Problems} \label{sec:OpenProblem}
While Theorem \ref{thm:BGL-hamiltonian1} restricts only one direction of nonadjacent pairs, 
our condition applies to both dominating and dominated nonadjacent pairs, with a slightly higher lower bound.
Based on this, we raise the following two problems toward extending Theorem \ref{thm:BGL-hamiltonian1} to pancyclicity.

As argued in the introduction, the condition of Theorem \ref{thm:BGL-hamiltonian1} covers 
the class of LISD. Therefore, in order to extend it to pancyclicity, 
the non-pancyclic LISDs must be necessarily included in the exceptional class.
A complete characterization of these digraphs thus appears to be an
essential step in this direction.

\begin{problem}
	Characterize the class of non-pancyclic LISDs.
\end{problem}

On the other hand, it might be a worthwhile attempt to restore the lower bound to the level of Theorem \ref{thm:BGL-hamiltonian1} 
and see whether pancyclicity is still guaranteed. 
In this setting, a third class of exceptional digraphs arises. 
Let $\mathcal{D}^-_{B}$ denote the class of digraphs obtained by removing one arc 
from a digraph in $\mathcal{D}_B$.
\begin{problem}
	Let $D$ be a digraph of order $n$ such that for every dominated nonadjacent pair 
	or dominating nonadjacent pair $\{u,v\}$, either
	 $$d(u)\geq n, \qquad d(v)\ge n-1,$$ 
	 or
	 $$d(u)\geq n-1, \qquad d(v)\ge n.$$
	Suppose further that $D$ does not belong to $\mathcal{D}_L$,
	$\mathcal{D}_B$ or $\mathcal{D}^-_B$. Is $D$ pancyclic?
\end{problem}


\begin{thebibliography}{99}
	
\bibitem{JBJ1990}
Bang-Jensen. J. (1990). Locally semicomplete digraphs: a generalization of
 tournaments. J. Graph Theory, 14(3):371--390.

\bibitem{BG2007}
Bang-Jensen, J., \& Gutin, G. (2007). 
Theory, algorithms and applications. Springer Monographs in Mathematics, Springer-Verlag London Ltd., London, 101.

\bibitem{BGL1996}
Bang-Jensen, J., Gutin, G., \& Li, H. (1996). Sufficient conditions for a digraph to be
Hamiltonian. J. Graph Theory, 22(2), 181–187.

\bibitem{BHP1993}
Bang-Jensen, J., Huang, J., and Prisner, E. (1993). In-tournament digraphs. J. Combin. Theory Ser. B, 59(2):267--287.
	
\bibitem{Bondy1973}
Bondy, J. A. (1973). Pancyclic graphs: recent results, Infinite and finite sets (Colloq., Keszthely, 1973; 
dedicated to P. Erdos on his 60th birthday), Vol. I, 181--187. In Colloq. Math. Soc. J\'{a}nos Bolyai (Vol. 10).

\bibitem{DK2017}
Darbinyan, S. K., \& Karapetyan, I. A. (2017).
On longest non-Hamiltonian cycles in digraphs with the conditions of
Bang-Jensen, Gutin and Li.
Discrete Applied Mathematics, 216, 537--549.

\bibitem{Fan1984}
Fan, G. H. (1984). New sufficient conditions for cycles in graphs. Journal of Combinatorial Theory, Series B, 37(3), 221--227.

\bibitem{Gho1960}
Ghouila-Houri, A. (1960). Une condition suffisante d'existence d'un circuit hamiltonien. C.R. Acad. Sci. Paris, 25, 495--497.

\bibitem{Guo1995}
Guo, Y. (1995). Locally semicomplete digraphs. Verlag der Augustinus-Buchhandlung.

\bibitem{HT1976}
H\"{a}ggkvist, R., \& Thomassen, C. (1976). On pancyclic digraphs.
Journal of Combinatorial Theory, Series B, 20(1), 20--40.
\href{https://doi.org/10.1016/0095-8956(76)90063-0}{doi:10.1016/0095-8956(76)90063-0}.

\bibitem{Mey1973}
Meyniel, H. (1973). Une condition suffisante d'existence d'un circuit hamiltonien dansun graphe orient'e. 
J. Combin. Theory Ser. B, 14, 137–-147.

\bibitem{Woo1972}
Woodall D.R. (1972). Sufficient conditions for cycles in digraphs. Proc. London Math. Soc., 24, 739–-755.

\bibitem{ZWHL2026}
Zhang, Z. B., Wu, W., He, W., \& Li, T. (2026). 
Cycles of lengths $3$ and $n-1$ in digraphs under a Bang-Jensen–Gutin–Li type condition. Graphs and Combinatorics, 42(2), 25.


\end{thebibliography}
\end{document}